\newcommand{\real}{\mathbb{R}}
\newcommand{\sR}{\mathcal{R}}
\newcommand{\sF}{\mathcal{F}}
\def\norm#1{\|#1\|}
\title{Convergence, stability and robustness of multidimensional
opinion dynamics in continuous time}
\author{Serap Tay Stamoulas\thanks{Department of Mathematics, Faculty of Science, Dicle University, 21280-Diyarbak\i r,Turkey({\tt serap.taystamoulas@dicle.edu.tr}).}\and Muruhan Rathinam\thanks{Department of Mathematics and Statistics, University of Maryland Baltimore County, 1000 Hilltop Circle, Baltimore, MD 21250 ({\tt muruhan@umbc.edu}).}  }
\begin{document}

\maketitle

\begin{abstract}
We analyze a continuous time multidimensional opinion model  
where agents have heterogeneous but symmetric and compactly supported interaction functions. 
We consider Filippov solutions of the resulting dynamics and show strong Lyapunov stability of all equilibria 
in the relative interior of the set of equilibria.  We investigate robustness 
of equilibria when a new agent with arbitrarily small weight is introduced to the system in equilibrium.
Assuming the interaction functions to be indicators, we provide a necessary condition and a sufficient condition for robustness of the equilibria. Our necessary condition coincides with the necessary and sufficient condition 
obtained by Blondel et al.\ for one dimensional opinions. 
\end{abstract}
\begin{keywords} 
opinion dynamics, multidimensional opinions, bounded confidence.
\end{keywords}

\begin{AMS}
37N99, 91F99.
\end{AMS}

\pagestyle{myheadings}
\thispagestyle{plain}


\begin{section}{Introduction}
Opinion dynamics is the study of the evolution of opinions through 
interactions among a  group of people referred to as agents. 
Models of opinion dynamics are based on the interaction policies between 
agents. 
These interaction policies depend on the opinions of interacting agents and  their confidence bounds.
Considering real life examples of interpersonal relations leads to the 
observation that not everyone trusts everyone else. This brings the idea of 
\emph{bounded confidence} (BC) in the modeling of opinion dynamics.  The BC 
models suggest that an agent will only be influenced by those whose opinions 
are closer to his/her own. BC models have been studied in discrete and  continuous time setting. One of the well known  discrete time BC model is known as 
the HK model and  was introduced by Hegselmann and Krause  \cite{ hegselmann2002opinion,  krause2000discrete}. The BC model used 
in \cite{hegselmann2002opinion} was given as
\begin{equation}
\label{eq:krausesmodel}
		x(t+1) = A(x(t)) x(t), \quad  t \in \{0,1,2,\ldots\},
\end{equation}
with interaction policy determined through the adjacency matrix  $A(x) \in \mathbb{R}^{n \times n}$  with entries 
\[
a_{ij}(x) = 
\begin{cases}
  	\frac{1}{|N_i(x)|} ,&   j \in N_i(x), \\
     	  0                      ,&  j \notin N_i(x),
\end{cases}
\]
where $n$ is the number of agents and for $i=1,\dots,n$, $x_i(t) \in \real$ represents the opinion of the $i$th agent at time $t$ and 
$N_i(x) = \{ 1 \leq j \leq n\; \; | \; \; |x_i-x_j| \leq \epsilon_i \}$ 
defines the \emph{neighbors} of agent $i$ and $\epsilon_i > 0$ is the 
confidence bound of the $i$th agent. 
Note that $|N_i(x)|$ denotes the cardinality of $N_i(x)$.
In this discrete time model agents  synchronously update their opinions by averaging the opinions of their neighbors. 
Hegselmann and Krause analyze the model for uniform confidence bounds $\epsilon_i =\epsilon$ for all agents $i$
and provide sufficient conditions that lead to consensus where all agents share one opinion  \cite{ hegselmann2002opinion,  krause2000discrete}.
Variations on the form of $N_i(x)$ have appeared in the literature. A particular case investigated by Mirtabatabaei and Bullo \cite{mirtabatabaei2012opinion} 
is given by $N_i(x) = \{ 1 \leq j \leq n\; \; | \; \; |x_i-x_j| \leq \epsilon_j \}$  
where $\epsilon_j$ is the influence bound of agent $j$ and this model is referred to as the bounded influence (BI) model. 
These authors also derive some sufficient conditions for both the BC and BI models to guarantee that a trajectory converges to a steady state.
We note that more generally one may take the opinions of agents to be vectors in $\real^d$. Other works that deal with the discrete time HK models may be found in \cite{blondel2005convergence,  blondel2R, blondel2009krause, etesami2013termination, hendrickx2008graphs, lorenz2005stabilization,  lorenz2007continuous, lorenz2007multidimensional,  moreau2005stability, Nedic2012,Etesami2015}.
An alternative discrete time model with asynchronous updates can be found in \cite{weisbuch2005persuasion, weisbuch2004bounded}.

 \indent To motivate the continuous time model, we may take the $i$th agent's opinion to be changing at a rate proportional to 
the difference 
$
\frac{\sum_{j \in N_i(x(t))} x_j(t)}{|N_i(x(t))|} - x_i(t),
$
between the average opinion of the neighbors and the self opinion. If the proportionality is given by a constant $\lambda>0$ we obtain
\begin{equation}\label{eq:cont-nonsym}
\dot{x}_i(t) = \lambda \frac{\sum_{j \in N_i(x(t))} (x_j(t)-x_i(t))}{|N_i(x(t))|}, \quad i=1,\dots,n.
\end{equation}
Alternatively, if one reweighs the opinion velocity by $|N_i(x(t))|/n$ to suggest a faster movement if there are more neighbors, then one obtains
\begin{equation}\label{eq:cont-sym}
\dot{x}_i(t) = \frac{\lambda}{n} \sum_{j \in N_i(x(t))} (x_j(t)-x_i(t)), \quad i=1,\dots,n.
\end{equation}
Additionally, one may assign weights $w_j >0$ for agents to indicate how influential they are. After absorbing $\lambda/n$ into the weights this results in the 
continuous time opinion model used by Blondel et al. \cite{blondel2010continuous} where the confidence bounds are taken to be homogeneous and equal to 1:
\begin{eqnarray}
\label{eq:blondels}
         \dot{x}_i(t) = \sum_{j: |x_i(t)-x_j(t)|<1} w_j(x_j(t)-x_i(t)), \quad i=1,\dots,n.
\end{eqnarray}
Blondel et al. investigate the case where opinions are taken to be scalars $d=1$ and show that almost all trajectories $x(t)$ converge to a limiting opinion  $x^*$  such that for any 
$i, \; j$, if $ i \neq j$, then $ |x^*_i-x^*_j| \geq 1$ or $x^*_i = x^*_j$. 
The reason for the qualification ``almost all'' is due to the discontinuity of the vector field in \eqref{eq:blondels}. 
Limiting state of opinions is viewed as a set of clusters where all agents in
a given cluster share a common opinion. In the limiting state, different
clusters must necessarily differ in their opinion values by more than $1$,
the confidence bound. 

Given that almost all trajectories converge to an equilibrium of clusters, 
a natural question is what kind of equilibrium clusters one might expect to 
see ``out there.'' The answer depends of course on assumptions on initial 
conditions. A natural situation to consider is where initial opinions 
of agents are chosen to be independent and drawn from a uniform distribution 
supported in a compact interval or compact and convex set in higher
dimensions. 
There is extensive numerical evidence \cite{hendrickx2008graphs} that 
showed that when started from random initial conditions, the resulting equilibrium clusters tended to be separated by a distance much greater than $1$ 
(the confidence bound) and close to $2$ when the weights of clusters were
roughly equal \cite{blondel2010continuous}.
In order to explain these results, Blondel et al.\ also introduced a notion of robustness of equilibria which
they called stability, and provided necessary and sufficient conditions \cite{blondel2010continuous}. In that
notion, roughly speaking, an equilibrium is said to be robust/stable 
if after adding an agent with a sufficiently small weight and letting the system evolve,  the new solution to the system can be made 
sufficiently close to the original equilibrium. Blondel et
al. \cite{blondel2010continuous} also conjecture that when the initial
opinions are chosen to be independent and identically distributed 
according to a probability density function $p(x)$ which has connected support
and satisfies $0<a \leq p(x) \leq b$ (for some $b \geq a>0$), the probability of convergence to a
robust/stable equilibrium tends to $1$ as the number of agents $n \to \infty$. 
The intuition behind this conjecture is that when the number of agents is very
large, the probability of the presence of at least one agent who would perturb
the system away from an non-robust/unstable equilibrium is high (close to $1$).    
We shall refer to this notion as \emph{robustness}  instead of stability to
differentiate it from Lyapunov stability. We shall provide our own definition
of robustness in Section 3 and provide a necessary condition and a sufficient
condition. 


\indent There is extensive literature that focuses on the analysis of the
opinion dynamics models for its consensus \cite{ tamuz2014,
  dolfin2015modeling, motsch2014heterophilious, weisbuch2002meet}. Motsch and
Tadmor \cite{motsch2014heterophilious} study a general class of opinion models
for $n$ number of agents with opinions of each agent considered as a vector in $\real^d$
\begin{equation}
\label{eq:opinionmodel-tadmor}
	\dot{x}_i(t) = \alpha \sum_{j=1}^n  a_{ij}(x(t)) (x_j(t)-x_i(t)), \quad i=1,\dots,n,
\end{equation}
where opinions are considered as vectors in $\real^d$ and the adjacency matrix $A = a_{ij}$ is taken to be one of the two following forms:
\begin{equation}\label{eq_tadmore_sym}
a_{ij}(x) = \phi(|x_j-x_i|)/n, \quad 1 \leq i,j \leq n,
\end{equation}
\begin{equation}\label{eq_tadmore_asym}
a_{ij}(x) = \frac{\phi(|x_j-x_i|)}{\sum_{k=1}^n \phi(|x_k-x_i|)}, \quad 1 \leq i,j \leq n.
\end{equation}
Here $\phi$ is a nonnegative function with compact support which generalizes the indicator function that appears in \eqref{eq:blondels} and 
$|x|$ denotes the norm of $x \in \real^d$. 
We note that $a_{ij}$ are symmetric in \eqref{eq_tadmore_sym} and this also corresponds to \eqref{eq:cont-sym}.  
On the other hand $a_{ij}$ are not symmetric in \eqref{eq_tadmore_asym} and this also corresponds to \eqref{eq:cont-nonsym}.  
Motsch and Tadmor \cite{motsch2014heterophilious} prove that if the support of $\phi$ is large enough to cover the convex hull of the initial state, 
namely when every agent interacts with every other agent initially, this will lead to consensus regardless of whether the adjacency matrix is symmetric or not. 
The influence of the shape of $\phi$ on the likelihood of consensus is investigated via numerical simulations in \cite{motsch2014heterophilious}.
The results show that {\em heterophilious} dynamics enhances consensus. The term heterophilious refers to the situation where agents are more influenced 
by others whose opinions differ greatly (but still lie within the confidence bound) than those whose opinions are closer to their own.  
Additionally some sufficient conditions for consensus are also provided in \cite{motsch2014heterophilious}.
Jabin and Motsch \cite{jabin2014clustering} also consider the system \eqref{eq:opinionmodel-tadmor} in multidimensions with nonsymmetric compactly supported interaction functions given by \eqref{eq_tadmore_asym} and prove convergence of trajectories as $t \to \infty$ 
to an equilibrium.  

Hendrickx and Tsitsiklis \cite{hendrickx2013} study the dynamics where scalars $x_i(t)$,  $ i =1, \dots,n$,  obey the equations
  \begin{eqnarray}
\label{eq:hendrickx}
         \dot{x}_i(t) = \sum_{j =1}^n a_{ij}(t) (x_j(t)-x_i(t)), \quad i=1,\dots,n,
\end{eqnarray} 
where the interaction function, $a_{ij}(t)$ is state independent and only a function of time. The $a_{ij}(t)$ are assumed to be 
nonnegative and measurable and the model represents a general ``consensus seeking system'' where $x_i(t)$ are some agent attributes 
that are scalar functions of time. Hendrickx and Tsitsiklis  prove that under the assumption that the interaction functions $a_{ij}(t)$ 
satisfy the \emph{cut-balance} condition, trajectories converge to a limit which is  in the convex hull of initial attributes $x_i(0)$, $i = 1, 2,\dots,n$.
Moreover,  Hendrickx and Tsitsiklis  \cite{hendrickx2013} provide sufficient conditions for the consensus and the disagreement of any 
two agent attributes $x_i(t)$, $x_j(t)$, $i,j = 1,2,\ldots,n$, $i \neq j$ as $t \to \infty$. We note that, the so-called type symmetric 
interaction functions, where there exists $K$ such that for all $i,j$, and $t$, it holds that $a_{ij}(t) \leq K a_{ji}(t)$, 
automatically satisfy the cut-balance condition. 
 
The models described so far consider the state as an ordered $n$-tuple $(x_1,\dots,x_n)$ where $x_i \in \real^d$ denotes 
the $i$th agent's opinion. The agents may alternatively be regarded as a
continuum as in \cite{blondel2010continuous}.  
It is also possible to consider a model where the identity of the agents 
is not important. In that case the state may be modeled by 
a measure on $\real^d$. In this viewpoint the state space can be taken to be the space of finite Borel measures on $\real^d$.
See Canuto et al.\ \cite{canuto2008eulerian,canuto2012eulerian} for an ``Eulerian" approach where $d$-dimensional opinions are considered. A proof of convergence of all trajectories (as $t \to \infty$) for discrete time model is provided in \cite{canuto2012eulerian}. An analogous result is stated without proof for the continuous time case in \cite{canuto2008eulerian}.

{\bf Contributions of this work:} In our study we generalize the model in \eqref{eq:blondels} to  $d$-dimensional opinions with finite number of agents 
indexed by $i =1,2,\ldots,n$. We consider 
\[
         \dot{x}_i(t)  = \sum_{j=1}^n \xi_{ij}(|x_j(t)-x_i(t)|) w_j (x_j(t)-x_i(t)), \quad i=1,\dots,n,
\]
where $\xi_{ij} : [0,\infty) \rightarrow [0,\infty)$ are compactly supported on $[0,q_{ij}]$ for some $q_{ij}>0$ and 
symmetric; $\xi_{ij} =\xi_{ji}$ for each $i,j =1,2,\ldots,n$.  More precise assumptions on $\xi_{ij}$ are given in \S 2. We note that $x_i \in \real^d$ is the opinion of the $i$th agent and $x_i^\ell \in \real$ is the $\ell$th component of the opinion of the $i$th agent. Also, compact support here means that the function is zero outside a set whose closure is compact.
This model is similar to \eqref{eq:opinionmodel-tadmor} except for the addition of the agents' weights $w_j$ and the 
assumptions on the form of the interaction functions $\xi_{ij}$. 
We also note that the symmetric case of \eqref{eq:opinionmodel-tadmor} given by \eqref{eq_tadmore_sym} is a special 
case of our model while the non-symmetric case of 
\eqref{eq:opinionmodel-tadmor} given by \eqref{eq_tadmore_asym} differs 
from ours. 

The rest of the paper is organized as follows. In \S 2 we discuss what we mean by solutions, and their existence for all times $t \geq 0$ and    
derive some preliminary results which include the Lyapunov stability of equilibrium points in the relative interior of the 
set of equilibria. We also provide a discussion on how 
the result in \cite{hendrickx2013} implies convergence of all trajectories.
In \S 3 we re-introduce the notion of robustness of equilibria. Under the assumption that $\xi_{ij}=1_{[0,1)}$ for all $i,j$,
we provide two main results; a necessary condition for robustness and a sufficient condition for robustness. We also provide 
a detailed study of the dynamics that ensues when a new agent with zero or near zero weight is introduced into a system in equilibrium.   \S 4 provides some numerical simulation results and in \S 5 we provide some concluding remarks.  
\end{section}

\section{The model, equilibria and their stability}
In this section we define our model and provide
some results on Filippov solutions. The main results are Lemma
\ref{lem-equilibria} about the set of equilibria and Theorem \ref{StableEquilibrium} which
proves the Lyapunov stability of all equilibria in the relative interior of the
set of equilibria.  

\subsection{Model assumptions and existence of Filippov solutions for $ \pmb {t \geq 0}$}
We consider  $n$  number of agents $i = 1, 2, \ldots, n$,
and we assign a weight denoted by $w_i > 0$ to each agent $i = 1, 2, \ldots, n$.
The weights can be interpreted as how influential the agent is, and  we denote the opinion of $i$th agent at time $t \geq 0 $ with a vector
  $x_i(t)=(x_i^1(t),x_i^2(t),\dots,x_i^d(t)) \in \real^d$ where $d \geq 1$. 
We shall use $|y|$ to denote the Euclidean norm of a vector $y \in \real^d$. 
Our model of the opinion dynamics is given by
\begin{eqnarray}
\label{eq:opinionmodel}
         \dot{x}_i(t)  = \sum_{j=1}^n \xi_{ij}(|x_j(t)-x_i(t)|) w_j (x_j(t)-x_i(t))  = f_i(x(t)), \quad i=1,\dots,n,
\end{eqnarray}
where the {\em interaction functions} $\xi_{ij} : [0,\infty) \rightarrow
  [0,\infty)$ are defined only for $i \neq j$, and we assume that 
each $\xi_{ij}$ satisfies the following:
\begin{remunerate}
\item  $\xi_{ij} =\xi_{ji}$ for each $i,j =1,2,\ldots,n$. (Symmetry)
\item  There exists $q_{ij} > 0$ such that for $r \geq q_{ij}$, $\xi_{ij}(r) = 0$. (Compact support)
\item  If $\xi_{ij}(r) =0$, then $r = 0$ or $r \geq q_{ij}$.
\item $\xi_{ij}$ is $C^1$ on $[0, q_{ij})$ and the following left hand limits exist:
\[
\lim_{r \to q_{ij}-} \xi_{ij}(r)  \;\; \text{ and } \; \lim_{r \to q_{ij}-}
\xi_{ij}'(r).
\] 
\end{remunerate} 
We note that the above conditions allow for a discontinuity of $\xi_{ij}$ at $q_{ij}$. Thus, $\xi_{ij}(x) = 1_{\{|x_i-x_j|<q_{ij}\}}(x)$ is a special case.
Compactly we can write \eqref{eq:opinionmodel} as
\begin{equation}
\dot{x}(t) = f(x(t)),
\end{equation}
where $f=(f_1,\ldots,f_n)$ is a vector field in $\real^{nd}$. In general,  $\xi_{ij}$  may have a discontinuity at $q_{ij}$ and hence $f$ will only be piecewise smooth and discontinuous. In this case $f$ is $C^1$  on an open set which includes the open set
\begin{equation}\label{eq-Cf}
	C_f = \{ x \in \real^{nd} \mid  |x_i -x_j| \neq q_{ij}, \forall i, j = 1, \ldots, n \}.
\end{equation}
Due to the discontinuous but piecewise smooth $f$, one has to consider Caratheodory solutions, 
Krasovskii solutions or Filippov solutions (instead of classical solutions) 
\cite{ceragioli2010continuous, cortes2008discontinuous}.
We note that  the authors of \cite{blondel2010continuous} consider a subclass of
Caratheodory solutions which they call Proper solutions. 
In this paper, by a solution we shall mean a Filippov solution.  We recall 
that a Filippov solution $x(t)$ starting from initial condition $x_0$ is an 
absolutely continuous function of $t$ that satisfies the differential inclusion 
\begin{equation}
\label{eq:inclusion}
\dot{x}(t) \in \mathcal{F}(x(t)) 
\end{equation}
for almost all $t$ and $x(0) = x_0$. The Filippov set valued map $\mathcal{F}$ is
defined by \cite{aubin2012differential, cortes2008discontinuous}
\begin{equation}
\label{eq:inclusion-func}
\mathcal{F}(x) = \cap_{\delta>0} \cap_{\text{meas}(N)=0} \; \overline{\text{co}} f(B(x,\delta) \setminus N),
\end{equation}
where $\overline{co}$ denotes the convex closure of a set.
We note that a Krasovskii solution is defined very similarly except no sets of  Lebesgue measure zero are removed as 
in \eqref{eq:inclusion-func} \cite{ceragioli2010continuous}. In our case, because of right-continuity of $\xi_{ij}$, removal of the sets of measure zero in \eqref{eq:inclusion-func} does not make a difference and hence, Filippov solutions and Krasovskii solutions  coincide.  

By our assumptions, $\xi_{ij}(|x_j-x_i|)$ is bounded for $x \in \real^{nd}$. Hence, one can obtain a global bound of the form
$| f(x) | \leq M |x|$ for all $x \in \real^{nd}$.
Therefore by Theorem 3.3 of \cite{taniguchi1992global}, starting from every 
initial condition $x_0 \in \real^{nd}$, solutions exist for all $t \geq 0$. 
We note that the upper semicontinuity requirement in Theorem 3.3 of  
\cite{taniguchi1992global} is guaranteed for the Filippov set valued map 
\eqref{eq:inclusion-func} \cite{aubin2012differential}.  We also refer to 
\cite{ceragioli2010continuous} for a detailed study of Krasovskii solutions 
for the case of $d=1$ and $\xi_{ij} = 1_{[0,1)}$.

\subsection{The Filippov set valued map}
The vector field $f$ corresponding to our model is discontinuous. 
The Fillipov set valued map $\sF(x)$ of \eqref{eq:inclusion-func} at a given point $x \in \real^{nd}$ is
defined by examining $f(y)$ in all small neighborhoods of $x$. In particular,
at a point of discontinuity $x$, one needs to examine the various possible continuous
extensions of $f$. 
To that end we define the functions 
$\tilde{\xi}_{ij} : [0,\infty) \to [0,\infty)$ such that these agree with
    $\xi_{ij}$ on $[0,q_{ij})$ and are continuous
extensions of $\xi_{ij}$.  In particular, we note
  that $\tilde{\xi}_{ij}(q_{ij}) = \xi_{ij}(q_{ij}-)$ (the left hand limit). 
For each possible (undirected) graph $G$ on the vertices $\{1, 2, \ldots, n\}$, we define 
the continuous vector field $f^G$ by 
\begin{equation}\label{eq:fG}
	f_i^G(x)  = \sum_{j; \, (i,j) \in G} \tilde{\xi}_{ij}(|x_j-x_i|) w_j (x_j-x_i), \quad i = 1, 2, \ldots, n,
\end{equation}
where $(i, j) \in G$ means that there is an edge between $i$ and $j$ in the graph
$G$. 
Now, we describe how for every Filippov solution $x(t)$, the derivative
$\dot{x}(t)$ can be described in terms 
of $f^G$. 
Let $\mathcal{G} = \{G_1, G_2, \ldots G_N\}$
be the set of possible graphs on vertices $\{1, 2, \ldots,
n\}$. 
We note that at each $x \in \real^{nd}$ there exists $G \in \mathcal{G}$ 
such that $f(x) = f^{G}(x)$. 
Following the ideas in \cite{ceragioli2010continuous} we note that at each $x
\in \real^{nd}$, there exists a nonempty subset $\mathcal{G}_x \subset \mathcal{G}$ such that the Filippov set valued map is given by
\begin{equation}
\label{eq:filippov-formula}
\mathcal{F}(x) = \text{co} \{ f^G(x) \, | \, G \in \mathcal{G}_x \},
\end{equation}
where $\text{co}$ is the convex hull of a set.  
Hence, for any solution $x(t)$, there
exist measurable functions $\alpha_i(t)$ for $i=1,\dots,N$ such that 
\begin{equation}
\label{eq:solution-graph-form}
	\dot{x}(t) = \sum_{i=1}^N \alpha_i(t) f^{G_i}(x(t)),\quad  \text{for almost all } t,
\end{equation} 
where $0 \leq \alpha_i(t) \leq 1$ and $\sum_{i=1}^N \alpha_i(t) = 1$ for almost all $t \geq 0$.

Uniqueness of the solutions is not guaranteed for all initial conditions.  Blondel et al.\, \cite{blondel2009} show that when $d=1$, and $\xi_{ij} = 1_{[0,1)}$ the system has a unique Proper solution for almost all initial conditions. 
If $\xi_{ij}$ are $C^1$ functions on $[0, \infty)$, then one can verify that the vector field $f$ in \eqref{eq:opinionmodel} is $C^1$. Thus, 
for any given initial state, the system has a unique solution defined  for all $t \geq 0$.

\subsection{Equilibria}
Lack of unique solutions requires a careful definition of equilibrium points.   
We define an equilibrium  of \eqref{eq:opinionmodel} as follows. 
\begin{definition}
$x_0 \in \real^{nd}$ is said to be an equilibrium of the system
\eqref{eq:opinionmodel} if and only if $x(t) = x_0$ for $t \geq 0$ is the 
unique solution emanating from the initial condition $x_0$.
\end{definition}

We note that  $0 \in \mathcal{F}(x)$ is a necessary, but not sufficient, condition for $x$ to be
 an equilibrium. On the other hand, 
$ \mathcal{F}(x) = \{0\}$ is a sufficient, but not necessary, condition for $x$
 to be an equilibrium. To see this, consider the one dimensional vector field
$f(x)=0$ for $x \leq 0$ and $f(x) = -1$ for $x>0$, for which $0$ is an
 equilibrium, but $ \mathcal{F}(0) = \{-\alpha \, | \, 0 \leq \alpha \leq 1\}$.  
Let us define 
\begin{equation}\label{eqF}
F = \{ x\in \real^{nd}\; | \; x_i = x_j \; \text{or} \; |x_i-x_j|> q_{ij}, \;\; i,j = 1, \ldots,n\}. 
\end{equation}
We note that $f(x)=0$ for all $x \in F$. Also since $F \subset C_f$ we have
$\mathcal{F}(x) = \{f(x)\} = \{0\}$ for all $x \in F$,  and thus each $x \in F$ is an  equilibrium point for the system \eqref{eq:opinionmodel}.  We note that the closure $\overline{F}$ of $F$  is given by 
\begin{equation}\label{eq-closureF}
\overline{F} = \{ x\in \real^{nd}\; | \; x_i = x_j \; \text{or} \; |x_i-x_j|\geq q_{ij}, \;\; i,j = 1, \ldots,n\}. 
\end{equation}

We shall show that along every Filippov solution, the (weighted) average 
opinion $\sum_{i=1}^n w_i x_i$ is constant and the (weighted) second moment 
$\sum_{i=1}^n w_i |x_i|^2$ is decreasing. 
First we state a lemma which states a related property which holds for each
of the vector fields $f^G$.  


\begin{lemma}
\label{flippov-vector-sum-product}
For each graph $G$ on $\{1,2, \ldots, n\}$ and each $x\in \real^{nd}$ the following hold:
\begin{equation}
\label{eq:vector-sum}
\sum_{i=1}^n w_i f_i^G(x) = 0, 
\end{equation}
\begin{equation}
\label{eq:vector-product}
\sum_{i=1}^n w_i x_i^T f_i^G(x) \leq 0.
\end{equation}
\end{lemma}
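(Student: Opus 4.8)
The plan is to reduce both identities to a single summation over the edges of $G$ and then exploit the symmetry $\tilde\xi_{ij} = \tilde\xi_{ji}$ together with the fact that $G$ is undirected. First I would substitute the definition \eqref{eq:fG} into each left-hand side and interchange the order of summation, abbreviating $a_{ij} := \tilde\xi_{ij}(|x_j - x_i|)$. This turns $\sum_i w_i f_i^G(x)$ into $\sum_{(i,j)\in G} w_i w_j a_{ij}(x_j - x_i)$ and $\sum_i w_i x_i^T f_i^G(x)$ into $\sum_{(i,j)\in G} w_i w_j a_{ij}\, x_i^T(x_j - x_i)$, where both sums run over \emph{ordered} pairs $(i,j)$ for which $\{i,j\}$ is an edge of $G$. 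Since $\xi_{ij}$, and hence $a_{ij}$, is defined only for $i\neq j$, there are no diagonal terms to track.

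The key step is to pair each ordered edge $(i,j)$ with its reverse $(j,i)$, which is legitimate precisely because $G$ is undirected. Using $a_{ij} = a_{ji}$ (symmetry of $\tilde\xi_{ij}$) and $w_i w_j = w_j w_i$, the scalar coefficient $w_i w_j a_{ij}$ is invariant under the swap. For \eqref{eq:vector-sum}, the vector factor changes sign, $(x_i - x_j) = -(x_j - x_i)$, so the $(i,j)$ and $(j,i)$ contributions cancel and the whole sum vanishes. For \eqref{eq:vector-product}, I would add the two contributions explicitly and use $x_i^T(x_j - x_i) + x_j^T(x_i - x_j) = 2\,x_i^T x_j - |x_i|^2 - |x_j|^2 = -|x_i - x_j|^2$, so that each pair combines to $-\,w_i w_j a_{ij}\,|x_i - x_j|^2$ and the ordered sum collapses to $\sum_{\{i,j\}\in G} \bigl(-\,w_i w_j a_{ij}\,|x_i - x_j|^2\bigr)$ over unordered edges.

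Finally, nonnegativity delivers the inequality: each $w_i, w_j > 0$ by assumption and $a_{ij} = \tilde\xi_{ij}(|x_j - x_i|) \geq 0$ because $\tilde\xi_{ij}$ maps into $[0,\infty)$, so every summand is $\leq 0$ and \eqref{eq:vector-product} follows. I do not expect a genuine obstacle here; the only points requiring care are the bookkeeping between ordered and unordered pairs — remembering that each undirected edge is counted twice in the ordered sum — and confirming that the $C^1$ extensions $\tilde\xi_{ij}$ inherit both the symmetry and the nonnegativity of the original $\xi_{ij}$, which is exactly how they were constructed.
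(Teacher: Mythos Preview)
Your proposal is correct and follows essentially the same argument as the paper: substitute the definition of $f_i^G$, symmetrize over ordered pairs $(i,j)$ and $(j,i)$ using $\tilde\xi_{ij}=\tilde\xi_{ji}$ and the undirectedness of $G$, obtain cancellation for \eqref{eq:vector-sum} and the telescoped term $-\,w_iw_j\,\tilde\xi_{ij}(|x_j-x_i|)\,|x_j-x_i|^2$ for \eqref{eq:vector-product}. Your care about nonnegativity of $\tilde\xi_{ij}$ is well placed and is guaranteed by its stated codomain $[0,\infty)$.
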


\begin{proof}
Let $G$ be a graph on $\{1,2, \ldots, n\}$ and  $x\in \real^{nd}$. Then
\begin{eqnarray*}
\sum_{i=1}^n w_i f_i^G(x)  &=&  \sum_{i=1}^n \sum_{j; \, (i,j) \in G} \tilde{\xi}_{ij}(|x_j-x_i|) w_i w_j (x_j-x_i) =0.
\end{eqnarray*}
We note that the sum on the right hand side of the equation above is computed for each (directed) edge $(i,j) \in G$ . By the symmetry of $\tilde{\xi}_{ij}$ and the fact that $(i,j) \in G \iff (j,i) \in G$, we will have pairs of terms that are equal but opposite in sign. Hence the result follows.
We may also write
\begin{eqnarray*}
2 \sum_{i=1}^n w_i x_i^T f_i^G(x) &=&  2  \sum_{i=1}^n  \sum_{j; \, (i,j) \in G} \tilde{\xi}_{ij} (|x_j-x_i|) w_i w_j x_i^T(x_j-x_i) \\
						&=& \sum_{i=1}^n  \sum_{j; \, (i,j) \in G} \tilde{\xi}_{ij} (|x_j-x_i|) w_i w_j  x_i^T(x_j-x_i) \\
						&+& \sum_{j=1}^n  \sum_{i; \, (j,i) \in G} \tilde{\xi}_{ij} (|x_j-x_i|) w_i w_j  x_j^T(x_i-x_j) \\
						&=&  \sum_{i=1}^n  \sum_{j; \, (i,j) \in G} \tilde{\xi}_{ij} (|x_j-x_i|)w_iw_j (x_i^T-x_j^T)(x_j-x_i) \\
						&=& -\sum_{i=1}^n  \sum_{j; \, (i,j) \in G} \tilde{\xi}_{ij} (|x_j-x_i|) w_iw_j |x_j-x_i|^2 \leq 0.
\end{eqnarray*}
\end{proof}
\begin{lemma}
\label{moments}
Let $x(t)$ be a solution of the system \eqref{eq:opinionmodel}.
Then the weighted average opinion 
$\bar{x}(t) =\frac{1}{n}\sum_{i=1}^n w_i x_i(t)$ is constant for all  $t \geq 0$. 
Moreover, if we set $m_2(x) = \sum_{i=1}^n w_i |x_i|^2 $, then 
$ m_2(x(t)) $ is decreasing  in $t$ for $t \geq 0$.
\end{lemma}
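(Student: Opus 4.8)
The plan is to reduce both assertions to the two pointwise identities in Lemma~\ref{flippov-vector-sum-product} by feeding them through the convex-combination representation~\eqref{eq:solution-graph-form} of a Filippov solution. First I would record the regularity that makes the computation legitimate: since $x(t)$ is absolutely continuous, $\bar{x}(t)$ is absolutely continuous (it is a fixed linear image of $x(t)$), and $m_2(x(t))$ is absolutely continuous as well, being the composition of the $C^1$ (hence locally Lipschitz) map $x \mapsto \sum_i w_i |x_i|^2$ with the absolutely continuous curve $x(t)$. Consequently each of $\bar{x}(t)$ and $m_2(x(t))$ equals the integral of its almost-everywhere derivative, so it suffices to evaluate those derivatives on the full-measure set of times $t$ at which $\dot{x}(t)$ exists and~\eqref{eq:solution-graph-form} holds.

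For the conservation of $\bar{x}$, at such a time $t$ I would compute
\[
\dot{\bar{x}}(t) = \frac{1}{n}\sum_{i=1}^n w_i \dot{x}_i(t) = \frac{1}{n}\sum_{k=1}^N \alpha_k(t)\sum_{i=1}^n w_i f_i^{G_k}(x(t)) = 0,
\]
where the rearrangement uses~\eqref{eq:solution-graph-form} and the inner sum vanishes by~\eqref{eq:vector-sum}. Since $\dot{\bar{x}}(t)=0$ for almost every $t$ and $\bar{x}$ is absolutely continuous, the fundamental theorem of calculus gives $\bar{x}(t)=\bar{x}(0)$ for all $t\ge 0$.

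For the monotonicity of $m_2$, at the same times the chain rule yields
\[
\frac{d}{dt} m_2(x(t)) = 2\sum_{i=1}^n w_i x_i(t)^T \dot{x}_i(t) = 2\sum_{k=1}^N \alpha_k(t)\sum_{i=1}^n w_i x_i(t)^T f_i^{G_k}(x(t)) \le 0,
\]
where each inner sum is nonpositive by~\eqref{eq:vector-product} and each weight $\alpha_k(t)\ge 0$. Thus $t \mapsto m_2(x(t))$ has nonpositive derivative almost everywhere and, being absolutely continuous, is nonincreasing.

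The only genuine subtlety, and the step I would treat most carefully, is the regularity bookkeeping: because $f$ is discontinuous the solution $x(t)$ is merely absolutely continuous and need not be $C^1$, so one cannot differentiate pointwise by the classical chain rule. The resolution is that Lemma~\ref{flippov-vector-sum-product} is a purely algebraic statement holding for \emph{every} $x$ and \emph{every} graph $G$, so the vanishing in~\eqref{eq:vector-sum} and the sign in~\eqref{eq:vector-product} are preserved under the convex combination~\eqref{eq:solution-graph-form} at almost every $t$; combined with the fundamental theorem of calculus for absolutely continuous functions, this upgrades the almost-everywhere derivative statements to the global conservation and monotonicity conclusions. I would also note that ``decreasing'' here is meant in the nonincreasing sense, consistent with $m_2$ being constant along trajectories that remain in the equilibrium set $F$.
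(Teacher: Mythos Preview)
Your proof is correct and follows essentially the same route as the paper: both feed the identities~\eqref{eq:vector-sum} and~\eqref{eq:vector-product} of Lemma~\ref{flippov-vector-sum-product} through the convex-combination representation~\eqref{eq:solution-graph-form} to control $\dot{\bar{x}}(t)$ and $\frac{d}{dt}m_2(x(t))$ almost everywhere. If anything, your version is slightly more careful than the paper's, which concludes ``by continuity'' where you correctly invoke absolute continuity and the fundamental theorem of calculus.
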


\begin{proof}
Let $x(t)$ be a solution. Using \eqref{eq:solution-graph-form} and \eqref{eq:vector-sum},
\begin{align*}
	\frac{d}{dt} \bar{x}(t)	 &=  \frac{1}{n} \sum_{i=1}^n w_i
							\dot{x}_i(t) 
					=  \frac{1}{n} \sum_{i=1}^n \sum_{j=1}^N  w_i \alpha_j(t) f_i^{G_j}(x(t))\\
					&= \frac{1}{n} \sum_{j=1}^N\alpha_j(t)  \sum_{i=1}^n  w_i  f_i^{G_j}(x(t)) =0
					\end{align*}
for almost all $t$. By continuity of $\bar{x}(t)$ we conclude it is constant
in $t$.
Also, by \eqref{eq:solution-graph-form} and \eqref{eq:vector-product},
\begin{align*}
	\frac{d}{dt} m_2(x(t)) 	&=	 \frac{d}{dt} \sum_{i=1}^{n} 
							w_i x_i(t) ^T x_i(t) 
					=	  \sum_{i=1}^{n}
							2 w_i (x_i(t))^T \dot{x}_i(t)  \\
					&=   2 \sum_{i=1}^n \sum_{j=1}^N \alpha_j(t) w_i (x_i(t))^T f_i^{G_j}(x(t))
					= 2 \sum_{j=1}^N \alpha_j(t) \sum_{i=1}^n w_i (x_i(t))^T f_i^{G_j}(x(t)) \leq   0,
\end{align*}
for almost all $t$. By continuity of $m_2(x(t))$ we conclude that it is
decreasing in $t$. 
\end{proof}

\begin{corollary}
\label{compactset}
	Each trajectory of the dynamical system  \eqref{eq:opinionmodel} stays in a compact set forward in time.
\end{corollary}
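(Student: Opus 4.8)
The plan is to read the conclusion straight off Lemma \ref{moments}. That lemma tells us that along any solution $x(t)$ the quantity $m_2(x(t)) = \sum_{i=1}^n w_i |x_i(t)|^2$ is decreasing in $t$, so in particular $m_2(x(t)) \leq m_2(x_0)$ for all $t \geq 0$, where $x_0 = x(0)$. Consequently the entire forward trajectory is trapped in the sublevel set
\[
S = \{ x \in \real^{nd} \mid m_2(x) \leq m_2(x_0) \}.
\]
It therefore suffices to show that $S$ is compact, and then $S$ itself is the required compact set.

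To verify compactness I would proceed in two short steps. First, $m_2$ is a continuous (indeed quadratic) function on $\real^{nd}$, so $S$, being the preimage of the closed ray $(-\infty, m_2(x_0)]$, is closed. Second, since every weight is strictly positive I may set $w_{\min} = \min_i w_i > 0$ and observe the coercivity estimate $m_2(x) = \sum_{i=1}^n w_i |x_i|^2 \geq w_{\min} \sum_{i=1}^n |x_i|^2 = w_{\min} |x|^2$. This gives the inclusion $S \subseteq \{ x \mid |x|^2 \leq m_2(x_0)/w_{\min} \}$, so $S$ is bounded. A closed and bounded subset of the finite-dimensional space $\real^{nd}$ is compact, which completes the argument.

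There is really no substantive obstacle here; the corollary is an immediate packaging of Lemma \ref{moments}. The one hypothesis doing genuine work is the strict positivity of the weights $w_i$, as this is exactly what makes $m_2$ a positive definite quadratic form and hence coercive with bounded sublevel sets. Were some weight permitted to vanish, $m_2$ would fail to control the corresponding coordinate and the trajectory could in principle escape to infinity, so this assumption cannot simply be dropped. I note in passing that one could instead combine the conservation of the weighted mean $\bar{x}(t)$ from the same lemma with the monotonicity of $m_2$, but the monotonicity of $m_2$ alone already confines the trajectory to a fixed ellipsoid, so the cleaner route is to use it by itself.
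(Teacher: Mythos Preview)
Your proof is correct and follows essentially the same route as the paper: both use Lemma \ref{moments} to trap the trajectory in the sublevel set $\{m_2(x)\le m_2(x_0)\}$ and then invoke the positivity of the weights to conclude compactness. The only cosmetic difference is that the paper phrases the last step by observing that $\sqrt{m_2}$ is a norm on $\real^{nd}$, whereas you spell out the coercivity estimate via $w_{\min}$ explicitly.
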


\begin{proof}
Let $x_0 \in \real^{nd}$ be any point and let $x(t)$ be 
any trajectory starting from $x_0$. 
Since $m_2$ is decreasing along the trajectories, 
$
 			m_2(x(t)) \leq m_2(x_0)    \;\; \text{for all}  \;\; t \geq 0,
$
and the result follows since $\sqrt{m_2(x)}$ provides a norm on $\real^{nd}$. 
\end{proof}

The following lemma provides a useful description of the set of equilibria. 
\begin{lemma}
For each $x \in \real^{nd}$, we have that $0 \in \mathcal{F}(x)$ if and 
only if $x \in \overline{F}$. The set of equilibria contains $F$ and is
contained in $\overline{F}$, and when $\xi_{ij}$ are all $C^1$, $\overline{F}$ is
precisely the
set of equilibria.  
\label{lem-equilibria}
\end{lemma}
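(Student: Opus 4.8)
The plan is to establish the equivalence $0 \in \mathcal{F}(x) \iff x \in \overline{F}$ first, since the two set-inclusion claims and the $C^1$ statement then follow quickly. The main tool is the sign-definite identity contained in the proof of Lemma~\ref{flippov-vector-sum-product}, which actually shows
\[
\sum_{i=1}^n w_i x_i^T f_i^G(x) = -\tfrac12 \sum_{i=1}^n \sum_{j;\,(i,j)\in G} \tilde{\xi}_{ij}(|x_j-x_i|)\, w_i w_j\, |x_j-x_i|^2 \le 0,
\]
so that equality to zero forces every summand to vanish. I will combine this with the structure of $\mathcal{G}_x$ underlying \eqref{eq:filippov-formula}: every pair $(i,j)$ with $0<|x_i-x_j|<q_{ij}$ (an ``active'' pair) belongs to every $G\in\mathcal{G}_x$, every pair with $|x_i-x_j|>q_{ij}$ belongs to none, and the boundary pairs $|x_i-x_j|=q_{ij}$ are free to be included or omitted.

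For the forward direction, suppose $0\in\mathcal{F}(x)$ and write $0=\sum_{G\in\mathcal{G}_x}\lambda_G f^G(x)$ as a convex combination via \eqref{eq:filippov-formula}. Applying the linear functional $v\mapsto\sum_i w_i x_i^T v_i$ and using \eqref{eq:vector-product} yields $0=\sum_G \lambda_G\bigl(\sum_i w_i x_i^T f_i^G(x)\bigr)$, a sum of nonpositive terms with nonnegative weights; hence $\sum_i w_i x_i^T f_i^G(x)=0$ for every $G$ with $\lambda_G>0$. Fixing one such $G$ and invoking the displayed identity, every edge $(i,j)\in G$ must satisfy $\tilde{\xi}_{ij}(|x_j-x_i|)|x_j-x_i|^2=0$. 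If $x\notin\overline{F}$ there is an active pair with $0<|x_i-x_j|<q_{ij}$; such a pair lies in $G$, yet there $\tilde{\xi}_{ij}(|x_i-x_j|)=\xi_{ij}(|x_i-x_j|)\neq0$ by assumption~3 and $|x_i-x_j|\neq0$, contradicting the vanishing summand. Thus $x\in\overline{F}$. For the converse, given $x\in\overline{F}$, take $G\in\mathcal{G}_x$ to consist of exactly the active pairs (all boundary pairs omitted). For each edge $(i,j)\in G$ we have $|x_i-x_j|<q_{ij}$, so the defining property of $\overline{F}$ forces $x_i=x_j$; every term of $f^G(x)$ then vanishes and $0=f^G(x)\in\mathcal{F}(x)$.

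With the equivalence in hand, the inclusions are immediate. That $F$ consists of equilibria was already noted before the statement: $F\subset C_f$ and $f\equiv 0$ on $F$ give $\mathcal{F}(x)=\{0\}$ there, so the only solution through such $x$ is constant. Conversely, if $x_0$ is an equilibrium then the constant solution forces $0\in\mathcal{F}(x_0)$, whence $x_0\in\overline{F}$ by the equivalence; thus the equilibrium set is sandwiched between $F$ and $\overline{F}$. Finally, when all $\xi_{ij}$ are $C^1$ the field $f$ is $C^1$, so $\mathcal{F}(x)=\{f(x)\}$ and solutions are unique; then $x_0$ is an equilibrium iff $f(x_0)=0$ iff $0\in\mathcal{F}(x_0)$ iff $x_0\in\overline{F}$, which gives equality.

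The step I expect to be most delicate is justifying the precise description of $\mathcal{G}_x$ — specifically that every active pair belongs to all $G\in\mathcal{G}_x$ and that omitting all boundary pairs produces an admissible member of $\mathcal{G}_x$ — since this is exactly what lets me pass from ``$0$ is a convex combination of the $f^G(x)$'' to ``some single $f^G(x)$ vanishes.'' Everything else reduces to the sign-definite identity together with the elementary fact that a sum of nonpositive numbers is zero only when each term is.
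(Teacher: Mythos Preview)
Your argument is correct and is essentially the paper's own proof: both directions hinge on the sign-definite identity from Lemma~\ref{flippov-vector-sum-product}, and the paper likewise shows that when $x\notin\overline{F}$ every $G\in\mathcal{G}_x$ contains the offending pair $(i^*,j^*)$, forcing $\sum_i w_i x_i^T z_i<0$ for all $z\in\mathcal{F}(x)$. One cosmetic wrinkle in your converse: pairs with $x_i=x_j$ also satisfy $|x_i-x_j|<q_{ij}$ and hence must appear in every $G\in\mathcal{G}_x$, so your ``active pairs only'' graph need not be admissible --- but including those pairs is harmless since their terms vanish, and indeed the paper bypasses this by simply observing $f(x)=0$ on $\overline{F}$.
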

\begin{proof}
If $x \in \overline{F}$, then $f(x)=0$, and hence clearly $0 \in \mathcal{F}(x)$. 
On the other hand, suppose $x \notin \overline{F}$. Then there exist $i^*, j^*$ such that $i^* \neq j^*$, $x_{i^*} \neq x_{j^*}$
and $|x_{i^*}-x_{j^*}|< q_{i^*j^*}$. By continuity, for all $y$ in all
sufficiently small neighborhoods of $x$, we have that $y_{i^*} \neq y_{j^*}$
and $|y_{i^*}-y_{j^*}|< q_{i^*j^*}$. Then for each graph $G \in \mathcal{G}_x$ in 
\eqref{eq:filippov-formula}, we have that $(i^*,j^*) \in G$ and hence 
\begin{align*}
2 \sum_{i=1}^n w_i x_i^T f_i^G(x)  &= -\sum_{j=1}^n  \sum_{j; \,
  (i,j) \in G} \tilde{\xi}_{ij} (|x_i-x_j|) w_i w_j |x_j-x_i|^2\\
  &\leq - \tilde{\xi}_{i^*j^*} (|x_{j^*}-x_{i^*}|) w_{i^*}w_{j^*} |x_{j^*}-x_{i^*}|^2 <0.
\end{align*}
As a result, since each $z=(z_1,\dots,z) \in \mathcal{F}(x)$ is a convex
combination of $f^G$, we have that for each $z \in \mathcal{F}(x)$,
$
\sum_{i=1}^n w_i x_i^T z_i <0,
$
and hence $0 \notin \mathcal{F}(x)$. This proves the first statement.

If $x \in F$, as $f$ is $C^1$ in a neighborhood of $x$, we have that
$\mathcal{F}(x) = \{f(x)\}=\{0\}$. Hence, the set of equilibria contains $F$ 
and is contained in $\overline{F}$. 
Finally, if $\xi_{ij}$ are all $C^1$, then so is $f$, 
and for $x \in \overline{F}$ we have $\mathcal{F}(x) = \{f(x)\}=\{0\}$.  
\end{proof}

We note that, by above lemma, $F$ is precisely the {\em relative interior} of the set of
equilibria. 
It is instructive to describe a natural partition of $F$ into separated 
subsets which will play a role in our proof of Lyapunov stability.  
Let $ I =\{1,2,\ldots,n\}$ and   $\mathcal{P}_n$ be the collection of all partitions of $I$.
Then we may write
\[
		F = \bigcup_{P \in \mathcal{P}_n}
			F_P,
\]
where
\begin{equation}
\label{eq:def-FS}
F_P  = \left\{ x\in \real^{nd}\; \mid \; \begin{array}{rcl}
				  x_i = x_j & \iff &  \exists \ell  \text{ such that }  \{ i,j \} \subset S_\ell \\
				 |x_i-x_j| > q_{ij}  & \iff & \not\exists \ell \text{ such } \{i,j\} \subset S_\ell 
				 \end{array} \right\}.
\end{equation}
Here,  $P = \{ S_1, S_2, \ldots S_t \}  $ is a partition  of $ I$, that is $S_k \subset I$, $S_k \neq \emptyset$, $S_k\cap S_l = \emptyset, \forall k \neq l$, and $\cup_{k=1}^t S_k= I$. 
Correspondingly we define $\overline{F}_P$ by
\begin{equation}
\overline{F}_P  = \left\{ x\in \real^{nd}\; \mid \; \begin{array}{rcl}
				  x_i = x_j & \iff &  \exists \ell  \text{ such that }  \{ i,j \} \subset S_\ell \\
				 |x_i-x_j| \geq q_{ij}  & \iff & \not\exists \ell \text{ such } \{i,j\} \subset S_\ell 
				 \end{array} \right\}.
\end{equation}
We note that $\overline{F}_P$ are closed subsets of $\real^{nd}$ and 
$
		\overline{F} = \bigcup_{P \in \mathcal{P}_n}
			\overline{F}_P.
$

For $x \in \overline{ F}_P \subset \overline{ F}$, where $P = \{ S_1, S_2, \ldots S_t \}$, we call the sets $S_k$, $k=1,2,\ldots,t$ \emph{clusters}. Then we can say that $x\in \overline{ F}_P$ has $t$ clusters.  We like to note that sometimes we shall refer to the opinions $x_i$ such that $i \in S_k$ as a cluster.
\begin{lemma}
\label{SeperatedUnion}
For different partitions $P_1$ and $ P_2$  of the index set $I$, the sets $
\overline{F}_{P_1} $ and  $\overline{F}_{P_2}$ are disjoint and hence separated.
\end{lemma}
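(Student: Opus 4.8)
The plan is to use the fact that each set $\overline{F}_P$ encodes the partition $P$ exactly: the defining biconditionals tie the coordinate-equality pattern of a point to the block structure of $P$, so two genuinely different partitions cannot be realized simultaneously. Concretely, I would first translate the hypothesis $P_1 \neq P_2$ into a statement about a single pair of indices. Regarding each partition as an equivalence relation on $I$, distinct partitions induce distinct equivalence relations, so there must be indices $i, j \in I$ that are related under exactly one of the two partitions. After relabeling the partitions if necessary, I may assume that $i$ and $j$ lie in a common block of $P_1$ but in two different blocks of $P_2$.

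Next I would read off the incompatible constraints. From the biconditional defining $\overline{F}_{P_1}$, every $x \in \overline{F}_{P_1}$ satisfies $x_i = x_j$. From the biconditional defining $\overline{F}_{P_2}$, every $y \in \overline{F}_{P_2}$ satisfies $|y_i - y_j| \geq q_{ij}$; since $q_{ij} > 0$ (the compact support assumption on $\xi_{ij}$), this forces $y_i \neq y_j$. A single point cannot satisfy both $x_i = x_j$ and $|x_i - x_j| \geq q_{ij} > 0$, so $\overline{F}_{P_1} \cap \overline{F}_{P_2} = \emptyset$.

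It then remains to upgrade disjointness to separation. Since it was already observed that each $\overline{F}_P$ is closed, each set equals its own closure, so $\overline{F}_{P_1}$ is disjoint from the closure of $\overline{F}_{P_2}$ and vice versa, which is exactly the definition of separated sets. If a stronger, quantitative conclusion is preferred, I would instead estimate distances directly: for $x \in \overline{F}_{P_1}$ and $y \in \overline{F}_{P_2}$, the triangle inequality together with $x_i = x_j$ gives $q_{ij} \leq |y_i - y_j| \leq |y_i - x_i| + |y_j - x_j|$, and bounding the right-hand side by a constant multiple of the norm of $x - y$ on $\real^{nd}$ shows the two sets lie a positive distance apart, which again implies separation.

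I do not anticipate a serious obstacle; the only step requiring genuine care is the reduction in the first paragraph, namely making precise that two distinct partitions must disagree on some pair of indices and then orienting that pair correctly so that equality is forced on one side and $q_{ij}$-separation on the other. Everything after that is an immediate consequence of the biconditional form of the defining conditions and the earlier remark that the sets $\overline{F}_P$ are closed.
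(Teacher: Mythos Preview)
Your proposal is correct and follows essentially the same approach as the paper: both arguments locate a pair of indices $i,j$ lying in a common block of one partition but in distinct blocks of the other, and then read off the incompatible constraints $x_i=x_j$ versus $|x_i-x_j|\geq q_{ij}>0$ to conclude $\overline{F}_{P_1}\cap\overline{F}_{P_2}=\emptyset$. Your write-up is in fact slightly more complete, since you explicitly pass from disjointness to separation via the closedness of each $\overline{F}_P$ (and even offer a quantitative positive-distance variant), whereas the paper stops at disjointness and leaves that final step implicit.
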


\begin{proof}
Let $P_1$ and $P_2$ be two  different partitions of $I$.
 Then $\exists S$ such that $S \in P_1$ and $S \not\in P_2$ or vice versa. 
WLOG we consider the former.
Let $i \in S$. Then  $\exists! \; T \in P_2$ such that $i \in T$ and $S \neq T$.
Thus, $\exists j \neq i$ such that $j \in S \setminus T$ or $j \in T\setminus S$.
 If  $\{i, j\} \subset S, \;( S \in P_1),$   and  $ j \not\in T, ( T \in P_2)$, 
\begin{equation*}
\begin{rcases}
 			x \in \overline{F}_{P_1} 	&\Rightarrow 		x_i = x_j.\\  
			x \in \overline{F}_{P_2}     &\Rightarrow		|x_i - x_j | \geq q_{ij}.
\end{rcases}
\Rightarrow \overline{F}_{P_1} \cap  \overline{F}_{P_2} = \emptyset.
\end{equation*}
 If  $ \{i, j\} \subset T$, ( $T \in P_2$),  and $j \not\in S$, ($ S\in P_1$),
\begin{equation*}
\begin{rcases}
			x \in \overline{F}_{P_1} 	&\Rightarrow 		|x_i - x_j | \geq q_{ij} . \\
			x \in \overline{F}_{P_2}     &\Rightarrow		x_i = x_j.   \\
\end{rcases}
\Rightarrow \overline{F}_{P_1} \cap \overline{F}_{P_2} = \emptyset.			
\end{equation*}
\end{proof}  
\subsection{Lyapunov stability}
~
\begin{definition}
We shall say that an equilibrium point $x^*$ of  (\ref{eq:opinionmodel}) is
\emph{strongly Lyapunov stable} if for all $\epsilon >0$, there exists a
$\delta >0$  such that for all $x_0 $ in the $\delta$ neighborhood of $x^*$,
all solutions starting at $x_0$ stay in the $\epsilon$ neighborhood of $x^*$
forward in time. 
\end{definition}

This definition is an extension of the definition in the case of $C^1$ vector
fields \cite{perkobook} and our terminology of \emph{strong} is consistent
with \cite{cortes2008discontinuous}. 

\begin{theorem}
\label{StableEquilibrium}
If $x^* \in F$
then $x^*$ is Lyapunov stable. 
\end{theorem}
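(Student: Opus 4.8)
The plan is to exploit the fact that, near $x^*$, the discontinuous field $f$ actually coincides with a single $C^1$ vector field, so that the dynamics decouples cluster by cluster and becomes a contraction. Since $x^* \in F$, it lies in some $F_P$ with $P = \{S_1,\dots,S_t\}$; write $x^*_{(k)}$ for the common value of $x^*_i$ over $i \in S_k$. The defining inequalities of $F_P$ are strict: if $i,j$ lie in a common cluster then $|x^*_i - x^*_j| = 0 < q_{ij}$, and if they lie in different clusters then $|x^*_i - x^*_j| > q_{ij}$. By continuity of the pairwise distances I would first fix $\epsilon_1 > 0$ so small that both families of strict inequalities persist on $B(x^*,\epsilon_1)$. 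Let $G^*$ be the graph with $(i,j) \in G^*$ iff $i,j$ lie in a common cluster. On $B(x^*,\epsilon_1)$ no pair ever satisfies $|x_i - x_j| = q_{ij}$, so $B(x^*,\epsilon_1) \subset C_f$, and since $\xi_{ij}$ agrees with $\tilde\xi_{ij}$ on $[0,q_{ij})$ while vanishing beyond $q_{ij}$, we get $f(x) = f^{G^*}(x)$ there. In particular $f$ is $C^1$ on $B(x^*,\epsilon_1)$, so any Filippov solution is the unique classical solution as long as it stays in this ball, which disposes of the ``all solutions'' quantifier in the definition.

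Next I would record the decoupling and the resulting monotonicity. Because $G^*$ has no edges between clusters, $f^{G^*}_i(x)$ for $i \in S_k$ depends only on $\{x_j : j \in S_k\}$, so Lemma \ref{flippov-vector-sum-product} applies to the restricted graph on each $S_k$ and yields $\sum_{i \in S_k} w_i f_i^{G^*}(x) = 0$ and $\sum_{i \in S_k} w_i x_i^T f_i^{G^*}(x) \le 0$. Consequently, along any trajectory that remains in $B(x^*,\epsilon_1)$, the cluster mean $\mu_k = \tfrac{1}{W_k}\sum_{i \in S_k} w_i x_i$ (with $W_k = \sum_{i \in S_k} w_i$) is constant in $t$, exactly as in Lemma \ref{moments}. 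Writing $V_k(t) = \sum_{i \in S_k} w_i |x_i(t) - \mu_k|^2$ and differentiating, the $\mu_k$-terms drop out by $\sum_{i \in S_k} w_i f_i^{G^*} = 0$, leaving $\dot V_k(t) = 2\sum_{i \in S_k} w_i x_i^T f_i^{G^*} \le 0$, so each cluster spread $V_k$ is nonincreasing.

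With these in hand the estimate is routine. Suppose $|x_0 - x^*| < \delta$ in the (equivalent) max-norm $\max_i |x_i - x^*_i|$. Then $|\mu_k(0) - x^*_{(k)}| \le \max_{i \in S_k}|x_i(0) - x^*_i| < \delta$, hence $|x_i(0) - \mu_k(0)| < 2\delta$ and $V_k(0) < 4\delta^2 W_k$. For $i \in S_k$ and any $t$ with $x(s) \in B(x^*,\epsilon_1)$ on $[0,t]$, monotonicity gives $w_i|x_i(t) - \mu_k|^2 \le V_k(t) \le V_k(0) < 4\delta^2 W_k$, so
\[
|x_i(t) - x^*_i| \le |x_i(t) - \mu_k| + |\mu_k - x^*_{(k)}| < 2\delta\sqrt{W_k/w_i} + \delta \le C\delta,
\]
with $C = 1 + 2\sqrt{W/w_{\min}}$, $W = \sum_j w_j$, $w_{\min} = \min_j w_j$. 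I would then close the argument by bootstrapping: let $T = \sup\{t : x(s) \in B(x^*,\epsilon_1)\ \forall s \le t\}$. On $[0,T)$ the bound $|x(t) - x^*| \le C\delta$ holds, and if $\delta$ is chosen with $C\delta < \min(\epsilon,\epsilon_1)$ then continuity forces $|x(T) - x^*| \le C\delta < \epsilon_1$ when $T < \infty$, i.e. $x(T)$ is interior to $B(x^*,\epsilon_1)$, contradicting maximality of $T$. Hence $T = \infty$ and the trajectory stays in $B(x^*,C\delta) \subset B(x^*,\epsilon)$ for all $t \ge 0$.

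The one genuinely delicate point is the \emph{self-consistency} of this scheme: the decoupling into $f^{G^*}$, and therefore the contraction estimate, is valid only while the trajectory stays in $B(x^*,\epsilon_1)$, yet I am using that very estimate to conclude it never leaves. The continuity/bootstrapping step above is exactly what resolves this circularity, and it simultaneously takes care of the Filippov subtleties, since on $B(x^*,\epsilon_1)$ the field is $C^1$ and solutions are unique and classical. All other manipulations are the cluster-wise analogues of Lemmas \ref{flippov-vector-sum-product} and \ref{moments} together with norm comparability constants.
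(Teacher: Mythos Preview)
Your proof is correct and, like the paper's, rests on the observation that near $x^*$ the field $f$ coincides with the $C^1$ field $f^{G^*}$, so that the dynamics decouples cluster by cluster. The difference is in how stability is then extracted. The paper writes down the single Lyapunov function $V(x) = \sum_i w_i |x_i - x_i^*|^2$ on the open set $G_P$, verifies $\dot V \le 0$ there by the same symmetrization you use (same-cluster terms give $-\xi_{ij} w_i w_j |x_j-x_i|^2$, cross-cluster terms vanish), and then simply invokes the standard Lyapunov stability theorem for $C^1$ fields. You instead track the cluster-wise conserved means $\mu_k$ and the nonincreasing spreads $V_k = \sum_{i \in S_k} w_i |x_i - \mu_k|^2$, and close with an explicit estimate plus a bootstrap. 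The two are linked by the parallel-axis identity $\sum_{i \in S_k} w_i |x_i - x^*_{(k)}|^2 = V_k + W_k|\mu_k - x^*_{(k)}|^2$, so the paper's $V$ equals your $\sum_k V_k$ plus a quantity constant in time. The paper's route is shorter and more standard; yours is self-contained (no appeal to a black-box stability theorem) and delivers an explicit constant $C$ relating $\delta$ to $\epsilon$.
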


\begin{proof}
Let $x^* \in F$. Then $x_i^* = x_j^*$ or $ |x_i^* -x_j^*| > q_{ij}$,  $\forall i,j$.
By Lemma \ref{SeperatedUnion}  there exists a unique partition $P=\{S_1,S_2,\ldots S_t\}$ of the index set $I = \{1,2,\ldots,n\}$
such that $x^* \in F_P$ where $F_P$ is defined as in (\ref{eq:def-FS}).
Define the set $G_P \subset \real^{nd}$ as follows:
\begin{equation*}
	 G_P = \left\{ x \in \real^{nd} \; \mid \; \begin{array}{rcl}

	 \not\exists \ell  \text{ such that } \{ i,j \} \subset S_\ell  
	 										\implies | x_i -x_j| > q_{ij}  \\
	 \exists \ell  \text{ such that } \{ i,j \} \subset S_\ell  
	 										\implies | x_i -x_j| < q_{ij}  
	\end{array}
	\right\}
\end{equation*}

It is clear that $x^* \in G_P$, $G_P\subset C_f$ and that $G_P$ is open. 
Let us define a function $V: G_P \rightarrow \real$ such that 
$
	V(x) = \sum_{i=1}^n w_i | x_i - x_i^* |^2.
$
It is easy to see that $V \in C^1(G_P) $ and $V(x) \geq 0$ for all $x \in G_P$
with  $V(x) =0 \iff x=x^*$.
In what follows, by $\dot{V}(x)$, we mean $DV(x) f(x)$. 
We have
 \begin{align*}
	\dot{V}(x)  &=  \sum_{i=1}^n 2 w_i (x_i -x_i^*)^T \dot{x}_i	
			=  \sum_{i,j=1}^n  \xi_{ij}(|x_j-x_i|) w_i w_j 
							\Big[  (x_j^* -x_i^*)^T - (x_j -x_i)^T \Big] (x_j-x_i) \\
			&=  \sum_{ \exists \ell;  \{ i,j\} \subset S_\ell } \xi_{ij}(|x_j-x_i|) w_i w_j 
							\Big[  (x_j^* -x_i^*)^T - (x_j -x_i)^T \Big] (x_j-x_i)  \\
			&+  \sum_{ \not\exists \ell ; \{ i,j \} \subset S_\ell}  \xi_{ij}(|x_j-x_i|) w_i w_j 
							\Big[  (x_j^* -x_i^*)^T - (x_j -x_i)^T \Big] (x_j-x_i)\\
			&=  - \sum_{ \exists \ell;  \{ i,j\} \subset S_\ell }  \xi_{ij}(|x_j-x_i|) w_i w_j 
							|x_j -x_i | ^ 2.
\end{align*}
The last expression follows from the fact that $\forall i,j$
\begin{align*}
	&\exists \ell \text{  such that  }  {\{i,j\} \subset S_\ell }  \implies  x_i^* = x_j^*. \\
	&\not\exists \ell \text{  such that  }  {\{i,j\} \subset S_\ell } \implies    \xi_{ij}(|x_j-x_i|) = 0.  
\end{align*}
Therefore $\dot{V}(x) \leq 0$, $ \forall x \in G_P$.
Hence, $V$  defined on the open set $G_P$ is a (weak) Lyapunov function for
the equilibrium point $x^*$ \cite{perkobook}. Since the vector field $f$ is
$C^1$ in $G_P$, the Lyapunov stability of $x^*$ follows from a standard result
for $C^1$ vector fields \cite{perkobook}. 
\end{proof}

 Theorem \ref{StableEquilibrium} shows that all equilibria in $F$ are Lyapunov
 stable. If $x^* \in \overline{F} \setminus F$ is an equilibrium, this theorem
 does not apply, and the $x^*$ may not be stable. For instance, as an example,
 in the case where all $\xi_{ij}$ are $C^1$, let us consider  $n=2$ agents with scalar opinions and the homogeneous confidence bound $q =1$.  
The dynamics for this case is
\begin{eqnarray}
\label{eq:opinionmodel-2}
	\dot{x}_1 &= \xi_{12}(|x_2-x_1|) w_2(x_2-x_1), \nonumber 	\\
	\dot{x}_2 &= \xi_{21}(|x_1-x_2|) w_1(x_1-x_2),
\end{eqnarray}
provided the weights $w_1 = w_2 =1$ and note that $\xi_{12} = \xi_{21}$. 
Consider an equilibrium point 
 $x^* = (x_1^*, x_2^*)$ such that $ | x_1^* - x_2^* | =1$.
 Let $x \in B_\delta(x^*)$  for any $\delta >0$ such that $| x_1-x_2 |<1.$
 By Lemma \ref{moments} the line $x_1+x_2 = C$ is invariant under the dynamics.
Since $x_1(t) = C-x_2(t)$ for all $t$, we may write \eqref{eq:opinionmodel-2} as follows:
 \begin{align*}
      \dot{x}_1 	= - \dot{x}_2	
      			&= -\xi_{12}(|x_1-x_2|) w_1 (x_1-x_2)		\\
			&= \xi_{12}( |C- 2x_1| ) w_1 (C-2x_1).
 \end{align*}
 This is a one-dimensional ODE with the equilibrium point $ x_1 = \frac{C}{2}$  ( $x_2 = \frac{C}{2}$ ) and $x_1(t) \to \frac{C}{2}$ as $t \to \infty$.
Thus the trajectory starting from $x \in B(x^*,\delta)$ (an open ball of
radius $\delta$ centered
at $x^*$) such that $| x_1-x_2 |<1$ approaches an equilibrium on  the line $x_1 = x_2$ as $t \to \infty$ showing
that the equilibrium point $x^*$ is not stable.

\subsection{A more general model}
While our model \eqref{eq:opinionmodel} is more in line with continuous time multidimensional models 
in the literature, one may consider the more general model where each component $x_i^\ell$ of the
opinion of an agent 
has a different influence function $\xi^\ell_{ij}$:
\begin{equation}\label{eq:opinionmodel2}
\dot{x}^\ell_i = \sum_{j=1}^n \xi^\ell_{ij}(|x_j-x_i|)
(x^\ell_j-x^\ell_i), \quad i=1,\dots,n, \; \ell = 1,\dots,d.
\end{equation}
If we make the same assumptions on $\xi^\ell_{ij}$ as in the beginning of this section, 
most key results obtained in this section will remain valid, which we briefly describe. Existence of Filippov solutions for all $t \geq 0$ 
is still guaranteed. The set of $C^1$ vector fields described by \eqref{eq:fG} will be larger, as one has to 
consider ordered $d$-tuples of graphs on vertices $\{1,\dots,n\}$. Thus, the role of $\mathcal{G}$ is replaced
by $\mathcal{G}^d$. A modified form of \eqref{eq:solution-graph-form} holds, and Lemma \ref{moments} and Corollary \ref{compactset} remain valid. The set of discontinuities (of the vector field) however, becomes more complicated. 
Nevertheless, with $F$ and  
$\overline{F}$ defined as above after setting $q_{ij}$ to be the maximum of $q^\ell_{ij}$, the set of equilibria 
contains $F$ and is 
contained in $\overline{F}$. The Lyapunov stability 
result for equilibria in $F$ also follows using the same Lyapunov function.

\subsection{Convergence of trajectories} 
The scalar convergence result in \cite{hendrickx2013} readily applies to any (Filippov) solution $x(t)$ of our model \eqref{eq:opinionmodel}. 
To see this, define $a_{ij}(t) = w_j \xi_{ij}(x(t))$ for $t \geq 0$.  Then for each $\ell=1,\dots,d$, the function 
$y(t) = (x_1^\ell(t),x_2^\ell(t),\dots,x_n^\ell(t))$ satisfies 
\[
\dot{y}_i(t) = \sum_{j=1}^n a_{ij}(t) (y_j(t) - y_i(t)),
\]
for almost all $t$. 
With $K$ being the ratio of the maximum and minimum 
among the weights, we see that
$a_{ij}(t) \leq K a_{ji}(t)$ for all $t$ and $i,j$, making $a_{ij}$ type symmetric, and the result in \cite{hendrickx2013} implies convergence of $y(t)$. Hence, the convergence of $x(t)$ follows.

\begin{section}{Robustness of equilibria}
Suppose 
that the dynamics \eqref{eq:opinionmodel} is in an equilibrium state $x^* = (x_1^*, \ldots, x_n^*) \in \real^{nd}$. 
 Introduce a new agent (whom we shall call the {\em zero agent}) with initial opinion $x_0^*$, weight
 $\delta$ and additional symmetric interaction functions $\xi_{0j}$ for $j=1,\dots,n$ 
with compact supports $[0,q_{0j}]$. 
Consider the resulting configuration as an initial state for
 \eqref{eq:opinionmodel}  with $(n+1)$ agents and let a resulting 
solution be $(\tilde{x}_0(t), \tilde{x}_1(t), \tilde{x}_2(t), \ldots,
\tilde{x}_n(t))$. 
Define $ \Delta (x^*_0, \delta; x^*) = \sup | \tilde{x}_i(t) - x^*_i | $ where the 
supremum is taken over  $i =1,2,\ldots, n$, all possible solutions
$\tilde{x}(t)$ starting with initial condition $(x_0^*,x_1^*,\dots,x_n^*)$,
and all times $t \geq 0$. Thus, $\Delta(x_0^*,\delta;x^*)$ is a measure of the
disruption to the equilibrium $x^*$ caused by the introduction of the zero agent with weight $\delta$
and initial opinion $x_0^*$. 

We shall say that the equilibrium $x^*$ is {\em robust with respect to the initial
  zero opinion} $x^*_0$ provided 
\begin{equation}\label{eq:robust-wrt-x0}
\lim_{\delta \to 0+} \Delta(x^*_0,\delta;x^*) =0.
\end{equation}
We shall say that the equilibrium $x^*$ is {\em robust almost surely and uniformly} provided there exists a set $Z$ of Lebesgue measure zero such that
\begin{equation}
\lim_{\delta \to 0+} \sup_{x_0^* \in \real^d \setminus Z} \Delta(x_0^*,\delta;x^*)
= 0.  
\end{equation}
Our probabilistic terminology is justified if one considers choosing the initial opinion 
$x_0^*$ at random with uniform probability density inside 
the union of the balls $B(x_j^*,q_{0j})$ which is a set with finite Lebesgue
measure.  If $x_0^*$ is outside of these balls, then $\Delta(x_0^*,\delta;x^*)=0$. 
The term ``uniformly'' refers to taking supremum over $\real^d \setminus Z$.  
Finally, we shall say that the equilibrium $x^*$ is {\em not robust} provided
there exists a set $Z$ of strictly positive Lebesgue measure such that 
for each $x_0^* \in Z$ the limit \eqref{eq:robust-wrt-x0} fails to hold. 
 
Our definition of almost sure uniform robustness slightly differs from the stability defined in \cite{blondel2010continuous}. In \cite{blondel2010continuous} no sets of measure
zero are removed and Proper solutions (instead of Filippov solutions) are considered. 
For scalar opinions $(d=1)$, Blondel et al.\ \cite{blondel2010continuous}  prove that 
an equilibrium $x^* \in \real^n$ is robust if and only if for any two clusters $x_i^*$
and $x_j^*$ of $x^*$ with weights $w_i$ and $w_j$ respectively, we have 
$|x_i^*-x_j^*| > 1 + \frac{\min\{w_i ,w_j \}}{\max\{w_i,w_j\}}.$
 Here, the weight of a cluster $x_i^*$ is defined to be the sum of the weights of all agents in the cluster $x_i^*$. We also note that a slightly different notion of robustness was considered in the earlier work \cite{hendrickx2008graphs}.

{\bf Merger of agents within a cluster:}
We shall analyze our multidimensional model \eqref{eq:opinionmodel}
concerning robustness of its equilibrium points that are in the relative
interior $F$. In order to make the analysis tractable, from
now on,  we shall
take $\xi_{ij}$  to be indicator functions of the set
$[0,1)$. Note that, as mentioned earlier, we take $| . |$ to be the Euclidean norm in $\real^d$.
Suppose that $x^*=(x_1^*,\dots,x_n^*)$ is an equilibrium with $k \leq n$
number of clusters. Since the influence functions $\xi_{ij}$ are now assumed
to be identical, two distinct agents $i,j \in \{1,\dots,n\}$ with same initial
opinion  
($x^*_i=x^*_j$) can differ in their opinions at time $t>0$ ($x_i(t) \neq
x_j(t)$) only after the system encounters a discontinuity of $f$ at which
uniqueness of solutions is lost. Since the two main results in our paper, 
Theorem \ref{necessary} and Theorem \ref{sufficient} either rely on analysis of what happens to
the system until the first switching or assume conditions that guarantee
uniqueness, for ease of presentation we shall merge all the agents in a cluster into a single agent with the
combined weight. We emphasize that when uniqueness is lost one has to 
exercise caution. 

After such merging and renaming, we can consider the equilibrium to be 
$x^*=(x_1^*,x_2^*,\dots,x_k^*) \in \real^{kd}$ with renamed combined weights 
$w_1,\dots,w_k$. 
Each $x_i^*$ for $i=1,\dots,k$ is called a cluster,
and the unit open ball $B_i^* = B(x_i^*,1)$ will be called the
\emph{confidence ball} of cluster $i$ or that $x_i^*$. We shall
call $B_{ij}^* = B_i^* \cap B_j^*$,  the \emph{mutual confidence ball} of
$x_i^*$ and $x_j^*$. More generally, given a nonempty subset $S \subset
\{1,2,\dots,k\}$, we shall denote by $B^*_S$ the intersection of 
all the balls $B(x_i^*,1)$ where $i \in S$. 
We shall also denote by $m_S^*$, the {\em center of mass} of the clusters in $S$
defined by
$
m^*_S = \frac{\sum_{i \in S} w_i x_i^*}{\sum_{i \in S}w_i}.
$ 

In this section we present two main results: Theorem \ref{necessary} provides a
necessary condition  for robustness while Theorem \ref{sufficient}
provides a sufficient condition.  


\begin{subsection}{Dynamics with zero agent}
Here we focus on some general results on the dynamics that ensues when a 
zero agent is introduced into a system which
is in equilibrium with $k$ clusters. Let us denote the distinct equilibrium clusters by
$x^*_i \in \real^d$ with $i=1,\dots,k$ and their weights by $w_i$. Suppose the 
zero agent is introduced at initial opinion $x_0^* \in \real^d$ with weight
$\delta \geq 0$ which is ``small''. We shall refer to $x^* =
(x_1^*,\dots,x_k^*)$ as the {\em equilibrium} or {\em equilibrium clusters} and $x_0^*$ as the {\em
  initial zero opinion}. We shall also assume that $|x^*_i-x^*_j| > 1$ 
for all $i,j \in \{1,\dots,k\}$ with $i \neq j$.

{\bf Zero agent with zero weight:} 
It is instructive to first focus on the case
$\delta=0$ and later consider small positive perturbations to $\delta$. 
When $\delta=0$, the resulting dynamical system is effectively $d$ dimensional
as only the opinion of the zero agent will change in time while other agents'
opinions are frozen at $x_i^*$. In other words, the dynamics in the
$\real^{(k+1)d}$ is restricted to a $d$ dimensional affine
subspace corresponding to $x_i = x_i^*$ for $i=1,\dots,k$. The resulting $d$ dimensional dynamics for the opinion
$x_0(t)$ of the zero agent will follow a system that switches between
linear vector fields:
\begin{equation}
\dot{x}_0(t) = \sum_{i, |x_0-x_i^*|<1} w_i (x_i^* - x_0(t)).
\end{equation}

 There will be $k$ codimension $1$ switching surfaces
($d-1$ dimensional spheres in $\real^d$) given by $|x_i^* - x_0|^2 = 1$ for
$i=1,\dots,k$. These spheres divide $\real^d$ into open sets $O_S$ which correspond 
to all possible subsets $S$ of $\{1,\dots,k\}$. More precisely, 
for each subset $S \subset \{1,\dots,k\}$ define the open set $O_S$ by 
\begin{equation}
O_S = \{ x_0 \in \real^d \, | \, |x_0 - x^*_i| <1 \; \forall i \in S \text{ and
} |x_0 - x^*_i| > 1 \; \forall i \notin S \}.
\end{equation}
Inside $O_S$, $x_0(t)$ evolves according to 
\begin{equation}
\dot{x}_0(t) = \sum_{i \in S} w_i (x_i^* - x_0(t)).
\end{equation}    
When $S$ is empty, all points in $O_S$ are equilibria.     
For nonempty $S$ this may be rewritten as 
\begin{equation}
\dot{x}_0(t) = W_S (m^*_S - x_0(t)),
\end{equation}    
where $W_S = \sum_{i \in S} w_i$ and $m^*_S$ is the center of mass
$
m_S^* = \frac{1}{W_S} \sum_{i \in S} w_i x_i^*.
$
Thus, when $S$ is nonempty, inside the open set $O_S$ the (images of the)
trajectories $x_0(t)$ are straight lines (when extended beyond $O_S$ if
necessary) that join with $m_S$. For convenience we define
$
O = \cup_{S \subset \{1,\dots,k\}} O_S.
$

We shall only consider initial zero opinions $x_0^*$ that lie in $O$ (as the
complement of $O$ is a set of measure zero) and consider what happens 
to the Filippov solutions $x_0(t)$ 
that start at $x_0^*$. 
Since $x_0^*$ is in one of the open sets $O_S$, 
there is a time interval $[0,\epsilon)$ in which the solution is
  unique. Uniqueness may only be lost when the initially unique solution reaches 
a switching surface.  
 
Consider a point $y \in \real^d$ that lies on precisely one switching
surface; that is, there exists a unique $i$
such that $|x_i^* - y| =1$. In that case, for all sufficiently small open
neighborhoods $U$ of $y$, we have $U \cap O = U \cap( O_{S_1} \cup O_{S_2})$ 
where after a possible reordering of the cluster labels, we may assume $|y-x_l^*|=1$,
$S_1 = \{1,\dots,l-1\}$ and $S_2 = \{1,\dots,l\}$. 
Let us examine what happens when trajectory $x_0(t)$ 
reaches $y$ in finite time from either $O_{S_1}$ or $O_{S_2}$. The key issue
is whether this trajectory may be continued uniquely. There are a few cases to
consider. We refer to \cite{dieci2009sliding} for simple conditions that allow for unique continuation 
of Filippov solutions at a switching surface. 

{\bf Case 1:} $S_1$ is empty. Then $l=1$ and $S_2 = \{1\}$. 
In this case, since all points in $O_{S_1}$ are equilibria, $x_0(t)$ could not have arrived at $y$ from $O_{S_1}$. 
As the vector field in $O_{S_2}$ will be pointing away from the tangent plane to the switching surface at $y$, the solution could not have arrived from
$O_{S_2}$ either. Thus this represents an impossible scenario. (We note that, if such
a $y$ is considered as an initial condition, there will be multiple Filippov solutions emanating from it.)  

{\bf Case 2:} $S_1$ is nonempty. Let $T_y$ denote the tangent hyperplane to 
the sphere $|x_0 - x^*_l|^2 = 1$ at $y$.  There are five possible cases to consider depending on 
the position of $m_{S_1}^*$ and $m_{S_2}^*$ relative to $T_y$. 
We observe that $m^*_{S_2}$ lies on the interior of the line segment
$[m^*_{S_1},x^*_l]$ and that $x_l^*$ does not lie on $T_y$. 

{\bf Case 2(a):} $m^*_{S_1}$ and $x^*_l$ are on the same side of $T_y$, which 
also implies that $m^*_{S_2}$ lies on that side as well. 

\begin{wrapfigure}{r}{0.48\textwidth}
\centering
\vspace*{-0.5cm}\hspace*{-1cm}
\resizebox*{0.35\textwidth}{!}{%
\begin{tikzpicture}
[extended line/.style={shorten >=-#1,shorten <=-#1},
 extended line/.default=1cm,
 one end extended/.style={shorten >=-#1},
 one end extended/.default=1cm,
 ]
\filldraw 
(0,2) circle (0pt) node[align=left,   below] {} --
(3,2) circle (1.5pt) node[align=center, below] {\small$m^*_{S_1}$}  -- 
(5,2) circle (1.5pt) node[align=center,  below] {\small$m^*_{S_2}$} --
(6,2) circle (1.5pt) node[align=right,  below] {\small$x_l^*$};
\filldraw (4,5) circle (1.5pt) node[align = left, above] { \small $y$};
\draw [one end extended ] (2,1)  --  node [right] {$T_y$} (4,5);
\tkzDefPoint(5,6){A} \tkzDefPoint(4,5){B}\tkzDefPoint(4,3){C}
\tkzCircumCenter(A,B,C)\tkzGetPoint{O}
\tkzDrawArc(O,A)(C) 
 \draw (6,6) node[above ] { $ | x_0 - x_l^* | ^2 =1$}; 
\end{tikzpicture}}
\caption{An illustration of Case 2(a)}
\label{fig:Case2a} 
\end{wrapfigure}

 In this case the vector fields in $O_{S_1}$ as well as $O_{S_2}$ in a small
enough neighborhood of $y$ are both pointing into the same side of $T_y$ 
where $x_l^*$ is. This will satisfy the uniqueness condition for continuation of the Filippov
solution. If the solution arrived at $y$ from $O$, it must have arrived from
$O_{S_1}$. Moreover, the unique continuation will carry it into $O_{S_2}$ until next
switching. In short, this corresponds to switching where agent zero enters the
influence of cluster $l$.

{\bf Case 2(b):} $m^*_{S_2}$ and $x^*_l$ are on opposite sides of $T_y$ in which
case $m^*_{S_1}$ also lies on the same side as $m^*_{S_2}$. 

\begin{wrapfigure}[9]{r}{0.48\textwidth}
\centering
\vspace*{-0.5cm}\hspace*{-1cm}
\resizebox*{0.35\textwidth}{!}{%
\begin{tikzpicture}
[extended line/.style={shorten >=-#1,shorten <=-#1},
 extended line/.default=1cm,
 one end extended/.style={shorten >=-#1},
 one end extended/.default=1cm,
 ]
\filldraw 
(0,2) circle (1.5pt) node[align=center, below] {\small$m^*_{S_1}$}  -- 
(2,2) circle (1.5pt) node[align=center,  below] {\small$m^*_{S_2}$} --
(6,2) circle (1.5pt) node[align=right,  below] {\small$x_l^*$};

\filldraw (4,5) circle (1.5pt) node[align = left, above] { \small $y$};
\draw [one end extended ] (2,1)  --  node [right] {$T_y$} (4,5);
\tkzDefPoint(5,6){A} \tkzDefPoint(4,5){B}\tkzDefPoint(4,3){C}
\tkzCircumCenter(A,B,C)\tkzGetPoint{O}
\tkzDrawArc(O,A)(C) 
 
\draw (6,6) node[above ] { $ | x_0 - x_l^* | ^2 =1$}; 
\end{tikzpicture} }
\caption{An illustration of Case 2(b)}
\label{fig:Case2b} 
\end{wrapfigure}

 In this case the vector fields in $O_{S_1}$ as well as $O_{S_2}$ in a small
enough neighborhood of $y$ are both pointing into the same side of $T_y$ which
does not contain $x_l^*$. This will also satisfy the uniqueness condition for continuation of the Filippov
solution. If the solution arrived at $y$ from $O$, it must have arrived from
$O_{S_2}$. Moreover, the unique continuation will carry it into $O_{S_1}$ until next
switching. In short, this corresponds to switching where agent zero leaves the
influence of cluster $l$.

{\bf Case 2(c):} $m^*_{S_1}$ and $m^*_{S_2}$ are on opposite sides of $T_y$ in
which case $m^*_{S_2}$ must lie on the same side as $x_l^*$. In this case, in all small neighborhoods of $y$, the vector fields in $O_{S_1}$  and $O_{S_2}$ are both pointing in opposite directions, and away from $T_y$. Thus the solution $x_0(t)$ could not have arrived at $y$ from $O_{S_1}$ 
or from $O_{S_2}$. Thus, this also represents an impossible scenario. 
(As in Case 1, we note that if $y$ is the initial
condition, then there are multiple possible solutions emanating from it.)

{\bf Case 2(d):} $m^*_{S_1}$ lies on $T_y$. In this case in all small
neighborhoods of $y$, the vector field in $O_{S_2}$ is pointing away from
$T_y$ while the vector field inside $O_{S_1}$ becomes tangential to $T_y$ at $y$. Hence unique extension of Filippov solutions in not guaranteed. 

{\bf Case 2(e):}  $m^*_{S_2}$ lies on $T_y$. In this case in all small
neighborhoods of $y$, the vector field in $O_{S_1}$ is pointing away from
$T_y$ while the vector field inside $O_{S_2}$ becomes tangential to $T_y$ at $y$. Hence unique extension of Filippov solutions in not guaranteed. 

\begin{wrapfigure}[12]{r}{0.48\textwidth}
\centering
\vspace*{-0.5cm}\hspace*{-1cm}
\resizebox{0.35\textwidth}{!}{%
\begin{tikzpicture}
[extended line/.style={shorten >=-#1,shorten <=-#1},
 extended line/.default=1cm,
 one end extended/.style={shorten >=-#1},
 one end extended/.default=1cm,
 ]
\filldraw 
(0,2) circle (1.5pt) node[align=center, below] {\small$m^*_{S_1}$}  -- 
(3,2) circle (1.5pt) node[align=center,  below] {\small$m^*_{S_2}$} --
(6,2) circle (1.5pt) node[align=right,  below] {\small$x_l^*$};

\filldraw (4,5) circle (1.5pt) node[align = left, above] { \small $y$};
\draw [one end extended ] (2,1)  --  node [right] {$T_y$} (4,5);
\tkzDefPoint(5,6){A} \tkzDefPoint(4,5){B}\tkzDefPoint(4,3){C}
\tkzCircumCenter(A,B,C)\tkzGetPoint{O}
\tkzDrawArc(O,A)(C) 
 \draw (6,6) node[above ] { $ | x_0 - x_l^* | ^2 =1$}; 
\end{tikzpicture} }
\caption{An illustration of Case 2(c)}
\label{fig:Case2c}
\end{wrapfigure}
Therefore, we reach the conclusion that the trajectory $x_0(t)$ emanating from
an initial zero opinion $x_0^*$ in $O$ will be unique
for $t \geq 0$ provided that it never reaches a point in the intersection 
of two or more of the switching surfaces $|x_0 - x_i^*|^2 = 1$ and never 
encounters Cases 2(d) or 2(e). (Note that we do not claim this to be a necessary
condition for uniqueness).  
~\\
{\bf Type 1 and type 2 initial zero opinions and trajectories:}
We shall call such a trajectory and the corresponding initial zero opinion in $O$ a {\em
  regular trajectory} and a {\em regular initial zero opinion} respectively.
We also note that when a regular trajectory crosses a switching surface 
at a point $y$, the vector fields on either side of the switching 
surface are transversal to the surface and point in the same direction as 
described in cases 2(a,b).  

Among regular trajectories there are two types to consider. The first type,
which we shall call {\em type 1} consists of trajectories that undergo only
finitely many switches. The corresponding initial zero opinions in $O$ will be
referred to as {\em type 1} initial zero opinions. 
A regular trajectory and the corresponding regular initial zero opinion will
be called {\em type 2} when the trajectory undergoes infinitely many
switches. 

{\bf Zero agent with weight $\pmb { \delta \geq 0}$:}

We turn our attention to the case when $\delta>0$. In this case, the dynamics 
evolves in $\real^{(k+1)d}$ according to the system
\begin{equation}\label{eq-switch-kplus1}
\dot{x}_i = \sum_{j, |x_i-x_j|<1} w_j (x_j-x_i), \quad i = 0,1,\dots,k,
\end{equation}
where $w_0=\delta$. Now there are $k(k+1)/2$ switching surfaces given by the
codimension $1$ spheres $|x_i - x_j|^2=1$ for $0 \leq i < j \leq k+1$.  

The vector field is piecewise $C^1$, and there are open sets 
$\mathcal{O}_G \subset \real^{(k+1)d}$ corresponding to every graph $G$ on 
the vertices $\{0,1,\dots,k\}$ such that inside $\mathcal{O}_G$ the vector
field is linear and corresponds to the dynamics
\begin{equation}\label{eq-dyn-G}
\dot{x}_i = \sum_{j, (i,j) \in G} w_j (x_j-x_i), \quad i = 0,1,\dots,k,
\end{equation}
where $w_0=\delta$.
It is important to note that, when the zero agent is introduced into the
equilibrium, the dynamics \eqref{eq-switch-kplus1} is initially of the special
form: 
\begin{equation}
\label{eq:initialdynamics}
\text{\bf{Initial Dynamics} } \left\{
\begin{array}{lcl}
	\dot{x}_0^\delta &=& \sum_{j=1}^k w_j (x_j^\delta - x_0^\delta), 	\\	
	\dot{x}_j^\delta  &=& \delta(x_0^\delta - x_j^\delta),  \quad j = 1, 2, \ldots, k, 
\end{array} 
\right. \\
\end{equation}
with initial conditions $x_j^\delta(0) = x_j^*$,  $j = 0,1, 2, \ldots, k $.
In this form, none of the original clusters interact with each other.  
The superscript $\delta$ is used to emphasize the dependence of solutions on $\delta$. 
In the next two lemmas we obtain important results about initial dynamics.

\begin{lemma}
\label{dynamics-x0-m}
	 Consider the initial dynamics \eqref{eq:initialdynamics} with initial condition $(x^*_0,x^*_1,\dots,x_k^*)$. 
Denote the center of mass of the clusters $m_S^{\delta}(t) =   \frac{1}{W} \sum_{i\in S} w_i
x_i^{\delta}(t)$  where $S =~ \{1, 2,\ldots,k\}$ and $W = \sum_{j=1}^k w_j$. 
Then for $t \geq 0$ the following hold:
\begin{eqnarray}
\label{eq:ode-x0-m}
\dot{x}_0^\delta &=& W (m_S^\delta - x_0^\delta), \nonumber  \\ 
\dot{m}_S^\delta &=& \delta (x_0^\delta - m_S^\delta), 
 \end{eqnarray}
 where initial conditions are $ x_0^\delta(0)  = x_0^*$, $m_S^\delta(0)
 =\sum_{i=1}^k w_i x_i^*$. 
Thus the trajectories $x_0^\delta(t)$ and $m_S^\delta(t)$ lie on the line
segment $[x_0^*, m_S^*]$ for all $t \geq 0$. 

Moreover, if $m^\delta_*$ denotes 
the center of mass including agent zero, $m^\delta_* = \frac{1}{W+\delta}
\sum_{i=0}^k w_i x_i^*$, then one obtains for all $t \geq 0$ that 
\begin{equation}\label{eq-x0mS-m*}
|x_0^\delta(t) - m^\delta_*|^2 \leq e^{-2(W+\delta)t} |x_0^* - m^\delta_*|^2, \quad
|m_S^\delta(t) - m^\delta_*|^2 \leq e^{-2(W+\delta)t} |m_S^* - m^\delta_*|^2.
\end{equation}   
\end{lemma}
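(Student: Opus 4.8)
The plan is to treat \eqref{eq:dynamics-with-new-agent} as a constant-coefficient linear system in the single pair of $\real^d$-valued unknowns $(x_0^\delta, m_S^\delta)$ and to decouple it. First I would establish the reduced equations \eqref{eq:ode-x0-m}. The first follows immediately from the definition of $m_S^\delta$: since $\sum_{j=1}^k w_j x_j^\delta = W m_S^\delta$ and $\sum_{j=1}^k w_j = W$, the right-hand side $\sum_{j=1}^k w_j(x_j^\delta - x_0^\delta)$ collapses to $W(m_S^\delta - x_0^\delta)$. For the second, I would differentiate $m_S^\delta = \frac{1}{W}\sum_{i=1}^k w_i x_i^\delta$ and substitute $\dot{x}_i^\delta = \delta(x_0^\delta - x_i^\delta)$, again using $\sum_i w_i = W$ and $\sum_i w_i x_i^\delta = W m_S^\delta$, to obtain $\dot{m}_S^\delta = \delta(x_0^\delta - m_S^\delta)$. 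The initial conditions then follow from the definitions evaluated at $t=0$ (with the caveat that $m_S^\delta(0)$ carries the normalizing factor $1/W$ from its definition, i.e.\ $m_S^\delta(0) = m_S^*$).

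Next I would identify the two decoupling combinations, which are simply the eigen-directions of the $2\times 2$ coefficient matrix, whose eigenvalues are $0$ and $-(W+\delta)$. Subtracting the two equations in \eqref{eq:ode-x0-m} shows that the difference $D(t) := m_S^\delta(t) - x_0^\delta(t)$ satisfies $\dot D = -(W+\delta)D$, hence $D(t) = e^{-(W+\delta)t}(m_S^* - x_0^*)$. Meanwhile $\frac{d}{dt}(\delta x_0^\delta + W m_S^\delta) = \delta W(m_S^\delta - x_0^\delta) + W\delta(x_0^\delta - m_S^\delta) = 0$, so the weighted center of mass $m_*^\delta = \frac{1}{W+\delta}(\delta x_0^\delta + W m_S^\delta)$ is constant in $t$ and equals $\frac{1}{W+\delta}\sum_{i=0}^k w_i x_i^*$ (recall $w_0 = \delta$), consistent with its stated definition.

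From these two facts both trajectories follow explicitly. Solving the linear relations for $x_0^\delta$ and $m_S^\delta$ in terms of $m_*^\delta$ and $D$ gives $x_0^\delta(t) - m_*^\delta = -\frac{W}{W+\delta}D(t)$ and $m_S^\delta(t) - m_*^\delta = \frac{\delta}{W+\delta}D(t)$, so both are scalar multiples of the fixed vector $m_S^* - x_0^*$ and therefore lie on the line through $x_0^*$ and $m_S^*$. Tracking the scalar coefficients $-\frac{W}{W+\delta}e^{-(W+\delta)t}$ and $\frac{\delta}{W+\delta}e^{-(W+\delta)t}$, which decrease in absolute value from their $t=0$ values (corresponding exactly to $x_0^*$ and $m_S^*$) monotonically toward $0$, confirms that the trajectories stay on the segment $[x_0^*, m_S^*]$. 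Finally, taking norms in these two identities and comparing with the $t=0$ case yields $|x_0^\delta(t) - m_*^\delta| = e^{-(W+\delta)t}|x_0^* - m_*^\delta|$ and likewise for $m_S^\delta$; squaring gives \eqref{eq-x0mS-m*}, in fact with equality rather than inequality.

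There is no genuine obstacle here: once the system is recognized as linear with the two invariant modes above, everything reduces to bookkeeping. The only point needing a little care is that the unknowns are $\real^d$-valued rather than scalar, but this causes no difficulty since the same scalar coefficients multiply the single fixed vector $m_S^* - x_0^*$ in every coordinate simultaneously.
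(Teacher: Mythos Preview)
Your argument is correct, and in fact sharper than the paper's: by explicitly diagonalizing the two-dimensional system you obtain the closed-form solutions and hence \eqref{eq-x0mS-m*} with equality, and you also correctly flag the missing $1/W$ in the stated initial condition for $m_S^\delta(0)$.

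The paper proceeds slightly differently for the second part. Rather than solving the system, it observes the algebraic identity $W(m_S^\delta - x_0^\delta) = (W+\delta)(m_*^\delta - x_0^\delta)$ (which is equivalent to your conservation of $\delta x_0^\delta + W m_S^\delta$) and then computes directly
\[
\frac{d}{dt}|x_0^\delta - m_*^\delta|^2 = 2(x_0^\delta - m_*^\delta)^T \dot{x}_0^\delta = -2(W+\delta)|x_0^\delta - m_*^\delta|^2,
\]
treating $m_*^\delta$ as a constant. This Lyapunov-style calculation is terser but leaves the line-segment claim and the explicit form of the trajectories implicit; your diagonalization makes both transparent and immediately shows why the trajectories remain on $[x_0^*, m_S^*]$, which the paper does not spell out. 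The two routes rest on the same invariants (the conserved total center of mass and the exponentially decaying difference $m_S^\delta - x_0^\delta$), so the distinction is one of presentation rather than substance.
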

 
\begin{proof}
The proof of the first part is straightforward algebra. 
For the second part of \eqref{eq-x0mS-m*}, noting that 
$W (m_S^\delta - x_0^\delta)  =  (W+\delta) (m_*^\delta - x_0^\delta)$,
one obtains that   
\[
\frac{d}{dt} |x_0^\delta - m^\delta_*|^2 = 2 (x_0^\delta-m^\delta_*)^T
\dot{x}_0^\delta = - 2 (W+\delta) |x_0^\delta - m_*^\delta|^2,
\]  
and the result is immediate. The second estimate is proven 
similarly.  
\end{proof}
 

The following lemma provides continuous perturbation results on initial dynamics
 in the limit $\delta \to 0+$, and forms the backbone 
of our Theorem \ref{sufficient}.   
A key point of the following lemma is that the trajectory $x_0^\delta(t)$ of the zero agent 
perturbs uniformly on the infinite time interval $[0,\infty)$ when $\delta$ is perturbed
from $0$. The trajectories $x_j^\delta(t)$ of the clusters perturb uniformly 
only on any finite time interval $[0,T]$, except for the case of one cluster
($k=1$), in which case the perturbation is uniform on the infinite time
interval.    
\begin{lemma}
\label{unifom-convergence-of-trajectories}
Let $x^*=(x_0^*, x_1^*,\ldots, x_k^*)  \in K \subset \real^{(k+1)d} $ for some
compact set $K$ and consider initial dynamics \eqref{eq:initialdynamics}
 for $\delta \in [0, \delta_0] $.
Then, 
\begin{romannum}
\item $\forall T >0$,
$ 
 \lim_{\delta \to 0^+} \sup_{ t \in [0,T], x^* \in K}  | x_j^\delta(t, x^*) - x_j^0 (t, x^*) | = 0, \quad  j= 0,1,\ldots, k.
$
\item $ \lim_{\delta \to 0^+} \sup_{ t \in [0,\infty], x^* \in K}  | x_0^\delta(t, x^*) - x_0^0 (t, x^*) | = 0$.

\item For $k=1$,
\[
	\lim_{\delta \to 0^+} \sup_{ t \in [0,\infty], x^* \in K}  | x_j^\delta(t, x^*) - x_j^0 (t, x^*) | = 0, \quad  j=0,1.
\]
\end{romannum}

\end{lemma}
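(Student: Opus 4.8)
The plan is to treat \eqref{eq:dynamics-with-new-agent} as a family of linear ODEs parametrized by $\delta \in [0,\delta_0]$ and to exploit the exponential contraction toward the centers of mass established in Lemma \ref{dynamics-x0-m}. The first preliminary step is to obtain a bound, uniform in $\delta \in [0,\delta_0]$ and in $x^* \in K$, on the entire trajectory. By Lemma \ref{dynamics-x0-m} both $x_0^\delta(t)$ and $m_S^\delta(t)$ remain on the segment $[x_0^*,m_S^*]$, hence stay in a bounded set determined by $K$. For each cluster, $\dot{x}_j^\delta = \delta(x_0^\delta - x_j^\delta)$ forces $|x_j^\delta(t)|^2$ to be nonincreasing whenever $|x_j^\delta| > |x_0^\delta|$, so $|x_j^\delta(t)| \le \max(|x_j^*|,\, \sup_t |x_0^\delta(t)|)$. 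Thus every trajectory stays in a compact set $\widetilde{K}$ depending only on $K$ and $\delta_0$. I will write $C$ for a generic constant bounding the relevant quantities (such as $|x_0^* - m_S^*|$ and $|x_0^* - m^\delta_*|$) over $K$ and $\delta \in [0,\delta_0]$, absorbing dimensional factors.

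For part (i) I would invoke continuous dependence on the parameter via Gronwall. Writing $f^\delta$ for the (linear, hence globally Lipschitz) right-hand side of \eqref{eq:dynamics-with-new-agent}, the Lipschitz constant $L$ may be taken uniform over $\delta \in [0,\delta_0]$, while $|f^\delta(x) - f^0(x)| \le \delta\, C$ for $x \in \widetilde{K}$, since the two fields differ only in the cluster equations. Setting $y^\delta(t) = x^\delta(t) - x^0(t)$ and splitting $\dot{y}^\delta = [f^\delta(x^\delta) - f^\delta(x^0)] + [f^\delta(x^0) - f^0(x^0)]$ gives $|\dot{y}^\delta| \le L|y^\delta| + \delta C$, whence Gronwall yields $|y^\delta(t)| \le (\delta C/L)(e^{Lt}-1)$. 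On $[0,T]$ this is bounded by $(\delta C/L)(e^{LT}-1)$, which tends to $0$ as $\delta \to 0^+$ uniformly in $x^* \in K$, proving (i).

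The main obstacle is part (ii), where the Gronwall bound alone is useless because $e^{Lt}$ is unbounded; here I would patch the finite-time estimate with the exponential tail from Lemma \ref{dynamics-x0-m}. For $t \ge T$, using $|x_0^\delta(t) - m^\delta_*| \le e^{-(W+\delta)t}|x_0^* - m^\delta_*| \le e^{-WT}C$ and $|x_0^0(t) - m_S^*| \le e^{-Wt}|x_0^* - m_S^*| \le e^{-WT}C$, together with the elementary identity $m^\delta_* - m_S^* = \tfrac{\delta}{W+\delta}(x_0^* - m_S^*)$ giving $|m^\delta_* - m_S^*| \le (\delta/W)C$, the triangle inequality through $m^\delta_*$ and $m_S^* = m^0_*$ yields $|x_0^\delta(t) - x_0^0(t)| \le 2 e^{-WT}C + (\delta/W)C$ for all $t \ge T$. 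Given $\epsilon > 0$ I would first fix $T$ with $2 e^{-WT}C < \epsilon/3$, then shrink $\delta$ so that both $(\delta/W)C < \epsilon/3$ and, by part (i), $\sup_{t\in[0,T]}|x_0^\delta - x_0^0| < \epsilon$; combining the two regimes gives $\sup_{t \ge 0}|x_0^\delta - x_0^0| < \epsilon$, uniformly in $x^* \in K$.

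Finally, part (iii) is clean because for $k=1$ the center of mass of the clusters coincides with the single cluster, $m_S^\delta = x_1^\delta$, so the second estimate of Lemma \ref{dynamics-x0-m} controls the cluster directly. Since $m^\delta_* - x_1^* = \tfrac{\delta}{w_1+\delta}(x_0^* - x_1^*)$ and $x_1^0(t) \equiv x_1^*$, I obtain $|x_1^\delta(t) - x_1^*| \le e^{-(w_1+\delta)t}|x_1^* - m^\delta_*| + |m^\delta_* - x_1^*| \le \tfrac{2\delta}{w_1+\delta}|x_0^* - x_1^*| \le (2\delta/w_1)C$, uniformly in $t \ge 0$ and $x^* \in K$. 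This computation also clarifies why the cluster estimate does not extend to $[0,\infty)$ when $k > 1$: an individual cluster moves at rate $O(\delta)$ but over the relaxation time $\sim 1/\delta$ can accumulate an $O(1)$ displacement away from $x_j^*$, so only the aggregate $m_S^\delta$, rather than the individual clusters, enjoys the uniform infinite-horizon bound.
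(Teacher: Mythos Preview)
Your argument is correct and in fact gives more than the paper's own proof: it produces explicit quantitative bounds in $\delta$ (of the form $\tfrac{\delta C}{L}(e^{LT}-1)$ for (i), $2e^{-WT}C + \tfrac{\delta}{W}C$ for the tail in (ii), and $\tfrac{2\delta}{w_1}C$ for (iii)), whereas the paper obtains only qualitative uniform convergence. The paper proceeds by a soft compactness argument: it compactifies the time interval to $[0,\infty]$ by setting $x_j^\delta(\infty,x^*)=m_*^\delta(x^*)$, verifies pointwise equicontinuity of the family $\{x_j^\delta\}_{\delta\in[0,\delta_0]}$ on $[0,T]\times K$ (respectively $[0,\infty]\times K$) via the matrix bound $\|A(\delta)\|\le N_2$ and, for the point at infinity, the estimates \eqref{eq-x0mS-m*} from Lemma~\ref{dynamics-x0-m}, and then invokes Arzel\`a--Ascoli together with pointwise convergence in $\delta$ to upgrade to uniform convergence. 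Your route is more elementary and direct: a Gronwall perturbation estimate handles any finite horizon, and then the same decay \eqref{eq-x0mS-m*} is used not for equicontinuity at $\infty$ but to control the tail $[T,\infty)$ explicitly. The two approaches share the same analytic ingredients (linearity of the system, Lemma~\ref{dynamics-x0-m}); what differs is the packaging---compactness versus explicit splitting. One small remark: your uniform boundedness claim for the clusters can be made crisper via the variation-of-constants formula $x_j^\delta(t)=e^{-\delta t}x_j^* + \int_0^t \delta e^{-\delta(t-s)}x_0^\delta(s)\,ds$, which directly exhibits $x_j^\delta(t)$ as lying in the convex hull of $x_j^*$ and $\{x_0^\delta(s):s\le t\}$.
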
 
\begin{proof}
The key idea of the proof is to establish pointwise equicontinuity of the
trajectories and the use of Arzela-Ascoli theorem to show that the
convergence of the trajectories as $\delta \to 0$ 
is uniform in $t$ and $x^*$. For (ii) and (iii) where the time interval is $[0,\infty)$, we
  need to compactify the interval by including $\infty$. 
As the topology on $[0,\infty]$ which makes it compact does not arise from the standard metric, we
use the ``topological version'' of the definition of pointwise equicontinuity and the Arzela-Ascoli theorem, see for instance \cite{aliprantis1998principles}.
We note that the pointwise limit of the trajectories,
for fixed $t$ (including $t=\infty$ for (ii) and (iii))  and $x^*$, as $\delta \to 0$ is continuous due to continuous dependence on $\delta$ of the vector field.  

Compactly we can write initial dynamics \eqref{eq:initialdynamics} as
$\dot{x}^{\delta} = A(\delta) x^{\delta}$ 
where $A(\delta)$ is continuous in $\delta$.
It is straightforward to show that as $t \to \infty$, the trajectories 
$x_j^\delta(t)$ for $j=0,1,\dots,k$ converge to 
$
	m^\delta_*(x^*) =\frac{1}{(W+\delta)} \sum_{i=0}^k w_i x_i^*,
$
which is continuous in $\delta$. 
This allows us to extend the time domain of the trajectories continuously to
include $\infty$ by defining $x_j^\delta(\infty, x^*) = m^\delta_*(x^*) \in \real^d$ so that
$x_j^\delta:[0,\infty] \times K \to \real^d$ for $j=0,1,\ldots,k$ and regard
$\{x_j^\delta\}$ as a family of continuous functions of $t$ and $x^*$ indexed
by $\delta \in [0,\delta_0]$. 

Since the second moment function  $m_2(x) =  \sum_{i=0}^k   w_i |x_i|^2$
($w_0=\delta$) is decreasing along the trajectories, it is clear that $\{
x^\delta(t,x^*)\, | \,\delta \in [0,\delta_0] \}$ is uniformly bounded:
there exists $N_1>0$ such that for $i=0,1,\dots,k$ 
\[
|x_i^\delta(t,x^*)| \leq N_1, \quad \forall t \in [0,\infty], \; \forall x^* \in K, \;
\forall \delta \geq 0.
\] 

\noindent Proof of (i): 

First we establish the equicontinuity of the family 
$\{ x_j^\delta(t,x^*)\, |\, \delta \in [0,\delta_0]\}$ for $x^* \in K$ and $t
  \in [0,T]$,  $T < \infty$. 

For $x^*, y^* \in K$ and $t \in [0,T]$ we have
\[
	|  x^\delta(t,x^*) -  x^\delta(t,y^*)   | 	\leq
        \norm{e^{A(\delta)t} } |x^*-y^*| \leq  e^{\norm{A(\delta)}T } |x^*-y^*|.
\]
Since $A(\delta)$ is continuous, $\exists N_2 >0$ such that $\norm {A(\delta)} < N_2$ for $\delta \in [0,\delta_0]$. For given $\epsilon>0$ one can find $\gamma_1$ so that  if $ |x^*-y^*|< \gamma_1$ then the right hand side of above inequality is $< \epsilon/2$.
Similarly,
\[
	|  x^\delta(t,x^*) -  x^\delta(s,x^*)   | 	\leq  \int_s^t
        |\dot{x}^\delta(u, x^*) | du \leq  \int_s^t \norm{A(\delta)}
        ||x^\delta(u, x^*)| du 
	\leq N_1 N_2 |t-s|.
\]
For any given $\epsilon>0$ one can find $\gamma_2>0$ so that if $ |t-s| < \gamma_2$ then
the right hand side of this inequality is $< \epsilon/2$. Thus, for $(y^*,s)$ satisfying $|x^*-y^*| <\gamma_1$ and 
$|t-s| < \gamma_2$, we obtain that
$|  x^\delta(t,x^*) -  x^\delta(s,y^*)   | < \epsilon$, which shows
equicontinuity of the family at $(t,x^*)$. 

Moreover, by continuous dependence of solution of an ODE on parameters,  for any fixed $(t,x^*) \in [0,T]\times K$ and for each $j=0,1,\dots,k$ we have 
$x_j^{\delta}(t, x^*) \to x_j^0(t, x^*)$ as $\delta \to 0^+$.
Combining with Arzela-Ascoli theorem and standard arguments, we have that
\[
 \lim_{\delta \to 0^+} \sup_{ t \in [0,T], x^* \in K}  | x_j^\delta(t,x^*) - x_j^0 (t,x^*) | = 0, \quad  j=0,1,\ldots, k.
\]

\noindent Proof of (ii):

It is sufficient to prove that $x_0^\delta(t)$ is equicontinuous at
  $(\infty, x^*) \in [0,\infty] \times K$, for any $x^* \in K$.  For
  $x^*, y^* \in K$ one obtains
\[
	|x_0^\delta(\infty,x^*) - x_0^\delta(\infty, y^*)|  = | m^\delta_*(x^*)  - m^\delta_*(y^*) |
									    \leq  \frac{1}{W+\delta} \sum_{i=0}^k w_i |x_i^*-y_i^*|
									    \leq (k+1) |x^*-y^*|.
\]
For any given $\epsilon >0$, one can choose $\gamma>0$ such that above inequality is $<\epsilon/2$.
Noting that $m^\delta_*(x^*) = x_0^\delta(\infty, x^*)$, from Lemma
\ref{dynamics-x0-m} we obtain that
\[
	 |x_0^\delta(t,x^*)-x_0^\delta(\infty, x^*)|^2 \leq e^{-2Wt}  | x_0^*-m^\delta_*|^2,  \quad \forall x^* \in K.
\]
For any given $\epsilon >0$, one can choose $T_0>0$ such that  $\forall t > T_0$,
$
	 |x_0^\delta(t,x^*)-x_0^\delta(\infty, x^*)|<\epsilon/2.
$
Then, for $(y^*,t)$ satisfying $|x^*-y^*| < \gamma$ and $t>T_0$,
 \[
 	|x_0^\delta(t,x^*) - x_0^\delta(\infty,y^*)|  \leq  (k+1) |x^*-y^*| + e^{-Wt}  | x_0^*-m^\delta_*| < \epsilon.
 \]
 Moreover, 
$
	x_0^{\delta}(\infty,x^*)= \frac{1}{(W+\delta)} \sum_{i=0}^k w_i x_i^* \to x_0^0(\infty,x^*), \quad \text{   as   } \delta \to 0^+.
$
Hence, we can conclude that
 \[
  \lim_{\delta \to 0^+} \sup_{ t \in [0,\infty], x^* \in K}  | x_0^\delta(t, x^*) - x_0^0 (t, x^*) | = 0.
 \]

\noindent Proof of (iii):

  The proof for the case of $j=0$ is shown above. For the equicontinuity of $x_1^\delta(t,x^*)$ at $(\infty, x^*) \in [0,\infty] \times K$, for any $x^*\in K$, consider $ x^*, y^* \in K$ and note that $x^*=(x_0^*, x_1^*) $ and  $y^* = (y_0^*, y_1^*)$. Then,
\[
	|x_1^\delta(\infty,x^*) - x_1^\delta(\infty,y^*)|  = | m_*^\delta(x^*)  - m_*^\delta(y^*) |
									    \leq 2 |x^*-y^*|.
\]
For any $\epsilon>0$ given, one can choose $\gamma >0$ such that the right hand side of above inequality is $< \epsilon/2$.
Noting that $m^\delta_*(x^*) = x_1^\delta(\infty, x^*)$  and using Lemma
\ref{dynamics-x0-m} (since $k=1$, we have that $m_S^\delta = x_1^\delta$) 
\[
	 |x_1^\delta(t,x^*)-x_1^\delta(\infty, x^*)|^2 \leq e^{-2w_1t}  | x_1^*-m^\delta_*(x^*)|^2, \quad \forall x^* \in K.
\]
 One can choose $T_0>0$ such that $\forall t >T_0$,   $ \;e^{-w_1t}  | x_1^*-m_*| <\epsilon/2$.
 Thus, 
 \begin{align*}
 	|x_1^\delta(t,x^*) - x_1^\delta(\infty,y^*)| \leq 2 |x^*-y^*| + e^{-w_1t}  | x_1^*-m^\delta_*(x^*)| < \epsilon.
 \end{align*}
 Hence, the result follows as before.
\end{proof}

\begin{definition}\label{def-generic}
Given an equilibrium $x^* = (x^*_1,\dots,x_k^*)$, 
consider the 
$k$ switching surfaces or spheres (in $\real^d$) given by 
\[
\{ x_0 \in \real^d \, | \, |x_0 - x_i^*|^2=1 \}, \quad i=1,\dots,k.
\]
We shall call the equilibrium $x^*$ {\em generic} 
provided the following hold:
\begin{remunerate}
\item $|x_i^*-x_j^*| \neq 1$ for $i,j \in \{1,\dots,k\}$. 
\item no two spheres above are tangential,
\item for any nonempty subset $S \subset \{1,2,\dots,k\}$, the center 
of mass $m^*_S$ does not lie on any of the above spheres. \\
\end{remunerate}
\end{definition}

Now, for generic equilibria, we state a lemma which shows that type 1
trajectories retain their properties of 
uniqueness as well as finitely many switchings for all $t >0$, 
when the weight of zero agent is changed from $0$ to small positive values and
the initial opinion of zero agent is perturbed by small amounts. 
\begin{lemma}\label{lem-type1}
Let $x^* = (x^*_1,\dots,x_k^*)$ be a generic equilibrium. 
Suppose $\bar{x}_0^* \in O$ is an initial zero opinion of type 1 with respect
to this equilibrium cluster. Then there exists $m \in \mathbb{Z}_+$, $\epsilon>0$ and 
$\delta_0 >0$ such that for all $x_0^* \in B(\bar{x}_0^*,\epsilon)$ and
$\delta \in [0,\delta_0)$, the solution $x_0(t,\delta,x_0^*)$ emanating from 
$x_0^*$ is unique for $t \geq 0$ and undergoes exactly $m$ switchings 
only intersecting one switching surface at any switching time. 
\end{lemma}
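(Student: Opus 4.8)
The plan is to treat the $\delta=0$, $x_0^*=\bar{x}_0^*$ trajectory as a structurally stable skeleton whose switching pattern survives the joint perturbation of the initial opinion and the weight. First I would record the geometry of the reference trajectory $x_0^0(t)$ through $\bar{x}_0^*$. Being of type $1$ it switches only finitely often, say at times $0<t_1<\dots<t_m<\infty$; being regular and emanating from a generic equilibrium, at each $t_\ell$ it meets exactly one sphere $\{|x_0-x^*_{i_\ell}|=1\}$ and crosses it transversally as in Case 2(a) or Case 2(b), so that $\frac{d}{dt}|x_0^0(t)-x^*_{i_\ell}|^2\neq 0$ at $t=t_\ell$. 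After $t_m$ the trajectory stays in one region $O_{S_m}$ and, by Lemma~\ref{dynamics-x0-m}, runs along a straight segment into $m^*_{S_m}$. Genericity condition~(3) gives $|m^*_{S_m}-x^*_i|\neq 1$ for every $i$, so $m^*_{S_m}$ lies strictly inside $O_{S_m}$ and there is a margin $\rho>0$ with $\big|\,|m^*_{S_m}-x^*_i|-1\,\big|\ge\rho$ for all $i$. I would then fix $T>t_m$ so large that $|x_0^0(t)-m^*_{S_m}|<\rho/4$ for all $t\ge T$.

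Next I would prove structural stability on the finite window $[0,T]$. Between consecutive switches the field is linear, so the flow is smooth in $(\delta,x_0^*)$; at each crossing the transversality above lets the implicit function theorem deliver a unique nearby crossing time depending continuously on $(\delta,x_0^*)$, while Cases 2(a)/2(b) are exactly the conditions granting a unique Filippov continuation across a single transversal surface (cf.\ \cite{dieci2009sliding}). Iterating through $t_1,\dots,t_m$, and using that on each compact inter-switch subinterval the reference trajectory keeps a uniformly positive distance from all spheres (genericity rules out triple points and tangencies, so a perturbed crossing stays single-surface), I would choose $\epsilon>0$ and $\delta_0>0$ such that for $x_0^*\in B(\bar{x}_0^*,\epsilon)$ and $\delta\in[0,\delta_0)$ the solution is unique on $[0,T]$, undergoes exactly $m$ single-surface transversal crossings near $t_1,\dots,t_m$ and no others, and at $t=T$ satisfies $|x_0(T)-m^*_{S_m}|<\rho/4$ together with $|x_i(T)-x^*_i|<\rho/4$ for every $i$ (the clusters moving only $O(\delta)$ on a finite interval, as in Lemma~\ref{unifom-convergence-of-trajectories}(i)).

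The crux, and the step I expect to be the main obstacle, is the infinite tail $[T,\infty)$, where I must exclude any further switching of the zero agent. The difficulty, already flagged before Lemma~\ref{unifom-convergence-of-trajectories}, is that for $k>1$ the clusters do \emph{not} perturb uniformly on $[0,\infty)$: those in $S_m$ are slowly dragged an $O(1)$ distance toward the common center of mass and ultimately coalesce, so the active graph is not constant on the tail. What rescues the argument is the distinguished behaviour of the zero agent. Applying Lemma~\ref{dynamics-x0-m} to the group $\{0\}\cup S_m$, the points $x_0^\delta(t)$ and the $S_m$ center of mass $m^\delta_{S_m}(t)$ stay on the segment joining their time-$T$ values, both of which lie within $\rho/4$ of $m^*_{S_m}$; hence $x_0^\delta(t)$ remains within $\rho/2$ of $m^*_{S_m}$ for all $t\ge T$. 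For $i\notin S_m$ the margin then forces $|x_0^\delta(t)-x_i^\delta(t)|>1$, so the zero agent never enters their influence, while for $i\in S_m$ each $x_i^\delta$ is drawn toward $m^*_{S_m}$, keeping $|x_0^\delta(t)-x_i^\delta(t)|$ below $1-\rho/2$ at $t=T$ and decreasing thereafter, so it never returns to $1$. Thus no sphere $|x_0-x_i|=1$ is crossed for $t\ge T$, the zero agent undergoes exactly the $m$ switchings inherited from $[0,T]$, and uniqueness follows from the transversal single-surface nature of every crossing.

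The delicate part of this tail argument is making the cohesion claim for the star-shaped group $\{0\}\cup S_m$ fully rigorous, since at time $T$ the clusters of $S_m$ are only pairwise within confidence of the zero agent and not of one another, and the graph among them changes as they coalesce. The safe route is to bypass the individual cluster dynamics and argue entirely through the center of mass $m^\delta_{S_m}(t)$ and the contraction estimate \eqref{eq-x0mS-m*}, which pins $x_0^\delta$ near $m^*_{S_m}$ uniformly in $t$ and $\delta$ and shows that the whole group collapses without $|x_0^\delta-x_i^\delta|$ ever recrossing $1$; this sidesteps having to control the non-uniform cluster trajectories directly.
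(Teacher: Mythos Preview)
Your argument on the finite window $[0,T]$ --- iterating the implicit function theorem through the $m$ transversal crossings while invoking Lemma~\ref{unifom-convergence-of-trajectories}(i) to preclude stray switchings in between --- is precisely the paper's proof. The paper, in fact, stops there: it simply says the IFT step ``can be continued finitely many times'' and offers no separate argument for the tail $[t_m,\infty)$. Your treatment of the tail is therefore additional to what the paper provides.

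That tail argument, however, has two concrete gaps. First, the claim that $|x_0^\delta(t)-x_i^\delta(t)|$ is \emph{decreasing} for $i\in S_m$ is not true in general: differentiating gives
\[
\tfrac{d}{dt}|x_0-x_i|^2 \;=\; 2W_{S_m}(x_0-x_i)^T(m_{S_m}-x_0)\;-\;2\delta\,|x_0-x_i|^2,
\]
and the first term can be positive (e.g.\ in one dimension when $x_0$ lies strictly between $m_{S_m}$ and $x_i$). What you actually need is a uniform bound $|x_0-x_i|<1$, and that requires controlling the individual $x_i$, which your ``safe route'' claims to avoid. Second --- and this is the more serious issue --- the reduced ODE \eqref{eq:ode-x0-m} for $(x_0,m_{S_m})$ does survive intra-$S_m$ cluster interactions (these cancel in $\dot m_{S_m}$ by symmetry), but it \emph{fails} the moment some $x_i$ with $i\in S_m$ drifts into the confidence ball of a cluster $j\notin S_m$: then $\dot m_{S_m}$ picks up an uncancelled term $w_iw_j(x_j-x_i)/W_{S_m}$, cluster $j$ begins to move, and both your center-of-mass pinning of $x_0$ and your assertion that $x_j^\delta\equiv x_j^*$ for $j\notin S_m$ break down. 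Nothing in the generic-equilibrium hypotheses prevents the approximate path of $x_i^\delta$ (roughly the segment $[x_i^*,m^*_{S_m}]$) from meeting $B(x_j^*,1)$. So the center-of-mass reduction does not, by itself, sidestep the need to control individual cluster trajectories on the infinite tail.
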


\begin{proof}
In this proof, by ``smooth'', we shall mean $C^1$. 
By definition of type 1, when $\delta=0$, the unique solution $x_0(t,0,\bar{x}_0^*)$
emanating from $\bar{x}_0^*$ undergoes finitely many switchings, say $m$, where the 
solution only intersects one switching surface at a given switching time. 
Denote the switching times by $0 < \bar{t}_1 < \bar{t}_2 \dots < \bar{t}_m$, 
and let the corresponding switching surfaces be $|x_0 - x^*_{i_l}|^2=1$ 
where $i_1,\dots,i_m \in \{1,\dots,k\}$. We shall argue with the aid
of implicit function theorem, that the solution $x_0(t,\delta,x_0^*)$ perturbs smoothly and
uniquely when $\delta$ and $x_0^*$ are perturbed from the values of $0$ and
$\bar{x}_0^*$ respectively.

For $i=0,\dots,k$, denote the $i$th component of the solution of \eqref{eq-dyn-G} with initial 
condition $(x_0^*,x_1^*,\dots,x_k^*)$ by $\phi_i^G(t,\delta,x_0^*)$ and the
$i$th component of the vector field by $f_i^{G}$. 
In between the switching times $\bar{t}_{i}$, the solution
$x_0(t,0,\bar{x}_0^*)$ evolves according to linear dynamics of the form
\eqref{eq-dyn-G}. Denote the corresponding graphs by
$G_0,G_1,\dots,,G_{m}$.   

We first argue that the first switching time $t_1(\delta,x_0^*)$ perturbs 
smoothly.  
Define $g_1(t,\delta,x_0^*)$ by
\[
g_1(t,\delta,x_0^*) =
|\phi^{G_0}_0(t,\delta,x_0^*)-\phi^{G_0}_{i_1}(t,\delta,x_0^*)|^2,
\]
which is a smooth function of its arguments and by our assumption
$g_1(\bar{t}_1,0,\bar{x}_0^*)=1$.
Then
\[
\frac{\partial}{\partial t} g_1(\bar{t}_1,0,\bar{x}_0^*) = 2
(\phi^{G_0}_0(t,0,\bar{x}_0^*)-\phi^{G_0}_{i_1}(t,0,\bar{x}_0^*)^T
(f^{G_0}_0(\phi^{G_0}(t,0,\bar{x}_0^*))-f^{G_0}_{i_1}(\phi^{G_0}(t,0,\bar{x}_0^*)))
\neq 0,
\]
because of the fact that the vector fields on either sides of 
the switching surface at time $\bar{t}_1$ are transversal to the surface 
by our assumption of type 1. 

By the implicit function theorem, there exist $\epsilon>0$ and $\delta_0>0$ 
such that for all $x_0^* \in B(\bar{x}_0^*,\epsilon_1)$ and $\delta \in
[0,\delta_0)$ the first switching time $t_1(\delta,x_0^*)$ can be uniquely defined 
as a smooth function of its arguments so
that
\[
g_1(t_1(\delta,x_0^*),\delta,x_0^*) = 1.
\] 
Moreover, by Lemma \ref{unifom-convergence-of-trajectories} on uniform perturbation of trajectories,
$\epsilon,\delta_0$ can be chosen so that no other switching occurs before the supremum
of $t_1(\delta,x_0^*)$ over $\delta \in [0,\delta_0)$ and $x_0^* \in
  B(\bar{x}_0^*,\epsilon)$. Additionally, since $\phi_i^G(t,\delta,x_0^*)$ 
as well as the vector field corresponding to $G$ are smooth, we can also
conclude that the switching locations perturb smoothly so that 
$\delta_0$ and $\epsilon$ can be chosen so that the unique
continuation of the solution beyond the first switching holds  
for $\delta \in [0,\delta_0)$ and $x_0^* \in
  B(\bar{x}_0^*,\epsilon)$. 

This argument can be continued finitely many times, by shrinking $\epsilon$
 and $\delta_0$ if needed until one arrives at the resulting $\epsilon$ and $\delta_0$. We note for
 instance, that for
 the second switching, one defines the function
\[
g_2(t,\delta,x_0^*) =
|\phi^{G_1}_0(t-t_1(\delta,x_0^*),\delta,\phi^{G_0}_0(t_1(\delta,x_0^*)))-\phi^{G_1}_{i_2}(t-t_1(\delta,x_0^*),\delta,\phi^{G_0}_0(t_1(\delta,x_0^*)))|^2,
\]
and considers the implicit equation
$g_2(t,\delta,x_0^*)=1$.
\end{proof}

We remark that, in the proof of Lemma \ref{lem-type1}, the type 1 assumption
is necessary. If a type 2 trajectory  (zero weighted zero agent with
infinitely many switchings) is perturbed, the above proof does not work (since
the decreasing infinite sequence of $\epsilon$ and $\delta_0$ values may limit to zero), and
it is not clear that the perturbed solution will remain unique and/or perturb
smoothly.   

\end{subsection}

\begin{subsection}{The main results}
Before we present a necessary condition for robustness, we provide some preliminaries.
Recall that, given a nonempty subset $S \subset
\{1,2,\dots,k\}$, we denote by $B^*_S$ the intersection of 
all the balls $B(x_i^*,1)$ where $i \in S$. 
We define $C^*_S$, the {\em exclusive mutual confidence ball} of $S$ by
\[
C^*_S = \{ y \in B^*_S \, | \, |y-x_i| > 1 \;\; \forall i \notin S\}.
\]

\begin{definition}
Consider an equilibrium $x^* = (x_1^*, x_2^*, \ldots, x_k^*) \in \real^{kd}$  
of $k$ clusters. We say that the equilibrium 
satisfies the \textbf{shared center of mass condition (SCMC)} provided there exists 
$S \subset \{1,2,\dots,k\}$ containing at least two elements 
such that $m^*_S \in C^*_S$. 
\end{definition}

It can be shown \cite{blondel2010continuous} that the necessary and sufficient condition for robustness 
in the one dimensional case is equivalent to the requirement that the equilibrium $x^*$ 
does not satisfy SCMC. Naturally, an interesting question is whether, this also holds in 
the multidimensional case ($d>1$). The next theorem, proves that violation of SCMC is necessary 
for robustness. In other words, SCMC implies that the equilibrium is not robust. 

\begin{theorem}
\label{necessary}
Let $x^*$ be a generic equilibrium with $k$ clusters that satisfies the shared center
of mass condition. Then, for any set $S \subset \{1,2,\dots,k\}$ with at least
two elements such that $m^*_S \in C^*_S$,  $x^*$ is not robust with respect to 
initial zero opinions in some open ball centered at $m^*_S$. Therefore, $x^*$ is not robust.  
\end{theorem}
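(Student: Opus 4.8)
The plan is to produce a set of initial zero opinions of positive Lebesgue measure on which the disruption $\Delta(x_0^*,\delta;x^*)$ does not vanish as $\delta\to 0^+$. The natural candidate is $B_S^*$ itself: since $m_S^*\in B_S^*$ and $B_S^*$ is a finite intersection of open balls, it is open and nonempty, hence of positive measure. For every $x_0^*\in B_S^*$ the zero agent initially lies within distance $1$ of each cluster in $S$, while the clusters themselves are pairwise at distance $>1$ (equilibrium), so the motion begins in the special form \eqref{eq:dynamics-with-new-agent} with this $S$, and Lemma \ref{dynamics-x0-m} applies. The intuition is that the zero agent, though of vanishing weight, stays inside $B_S^*$ near the center of mass $m_S^*$ and slowly drags every cluster of $S$ toward it, producing an $O(1)$ displacement over a time of order $1/\delta$.

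The core is a slow--fast analysis of \eqref{eq:dynamics-with-new-agent}. Write $m_*^\delta=(W_S+\delta)^{-1}\big(\delta x_0^*+\sum_{i\in S}w_i x_i^*\big)$, so $m_*^\delta\to m_S^*$ as $\delta\to0^+$, and by genericity $m_S^*$ lies strictly inside $B_S^*$. Letting $c_S$ denote the $w$-weighted center of mass of the clusters in $S$ (so $c_S=m_S^\delta$), Lemma \ref{dynamics-x0-m} and a direct computation give
\[
\dot c_S=\delta(x_0^\delta-c_S),\qquad \frac{d}{dt}|x_i^\delta-c_S|^2=-2\delta\,|x_i^\delta-c_S|^2,\quad i\in S.
\]
The pair $(x_0^\delta,c_S)$ relaxes on the fast timescale to the conserved value $m_*^\delta$, with $c_S$ participating in the fast mode only with amplitude $O(\delta)$; hence $c_S$ stays within $O(\delta)$ of $m_S^*$ for all $t$, while each cluster contracts onto $c_S$ only on the slow timescale, so $x_i^\delta(t)\to m_*^\delta$ as $t\to\infty$. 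Consequently, for any $i\in S$,
\[
\Delta(x_0^*,\delta;x^*)\ \ge\ \sup_{t\ge0}|x_i^\delta(t)-x_i^*|\ \ge\ |m_*^\delta-x_i^*|.
\]
Because $|S|\ge2$ and the clusters are distinct, $m_S^*$ is a nontrivial convex combination of the $x_i^*$, so some $i\in S$ has $|m_S^*-x_i^*|>0$; letting $\delta\to0^+$ yields $\liminf_{\delta\to0^+}\Delta(x_0^*,\delta;x^*)\ge|m_S^*-x_i^*|>0$ for every $x_0^*\in B_S^*$, so \eqref{eq:robust-wrt-x0} fails on a set of positive measure and $x^*$ is not robust.

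The main obstacle is that this is genuinely an infinite-horizon effect: on any finite interval $[0,T]$ the clusters perturb uniformly back to their frozen values by Lemma \ref{unifom-convergence-of-trajectories}(i), so the non-vanishing of $\Delta$ can only be harvested at times of order $1/\delta$, precisely the regime where (for $k\ge2$) the cluster trajectories fail to perturb uniformly. Making the argument rigorous therefore requires controlling the dynamics over this long timescale and, in particular, showing that the subsystem \eqref{eq:dynamics-with-new-agent} keeps governing the motion long enough for the displacement to accumulate. The delicate point is a possible change of the interaction graph before the clusters have moved appreciably---either two clusters of $S$ approaching within distance $1$ of each other, or an outside cluster $x_l^*$ with $|m_S^*-x_l^*|<1$ being drawn in as the subsystem contracts toward $m_S^*$. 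I would handle this by observing that any such additional interaction preserves the relevant weighted center of mass and only strengthens the contraction, hence can only enlarge the eventual displacement of some cluster, so the lower bound on $\Delta$ survives; the genericity hypotheses (no center of mass on a sphere, no tangential or triple sphere intersections) are exactly what keep every such transition transversal and the relevant distances bounded away from $1$, so that the bound holds uniformly for all small $\delta$.
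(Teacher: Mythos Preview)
Your setup is correct and your infinite-horizon intuition is sound, but the proof as written has a genuine gap precisely where you yourself flag it: the last paragraph is a hand-wave, not an argument. Saying that an additional interaction ``preserves the relevant weighted center of mass and only strengthens the contraction'' does not establish a lower bound on $\sup_t|x_i^\delta(t)-x_i^*|$ for any specific $i$; once the interaction graph changes you lose control of which cluster moves where, and there is no obvious monotone quantity that survives arbitrary switchings to deliver the bound. You would need to track the full Filippov solution through a cascade of graph changes on the $O(1/\delta)$ timescale, which is exactly the regime where uniform perturbation fails.

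The paper avoids all of this by turning the obstacle into the proof. Instead of following the dynamics past the first switching, it notes that a switching \emph{must} occur in finite time (since under the unswitched dynamics the clusters in $S$ all collapse to a common point), and that the zero agent stays on the segment $[x_0^*,m_S^*]\subset B_S^*$ until then, so the first switching cannot involve agent zero: it must be two clusters $i,j$ reaching $|x_i^\delta(t_1^\delta)-x_j^\delta(t_1^\delta)|=1$. The triangle inequality then gives
\[
|x_i^\delta(t_1^\delta)-x_i^*|+|x_j^\delta(t_1^\delta)-x_j^*|\ \ge\ |x_i^*-x_j^*|-1\ >\ 0,
\]
so $\Delta(x_0^*,\delta;x^*)\ge\tfrac12(|x_i^*-x_j^*|-1)$ for every $\delta>0$, uniformly. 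No limit $\delta\to0$ in the dynamics is needed at all; the displacement is already $O(1)$ at the first switching time, and what happens afterward is irrelevant. This is the missing idea: harvest the bound \emph{at} the switching rather than trying to push through it.
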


\begin{proof}
Let $S \subset \{1,2,\dots,k\}$ be a set with at least two elements such that 
$m^*_S \in C^*_S \subset B^*_S$. Let $r<1$ be the maximum of $|m^*_S-x_i^*|$ 
for $i \in S$. Let $\epsilon>0$ be such that $\epsilon<1-r$ and 
$B(m^*_S,\epsilon) \subset C^*_S$. 
Suppose that we introduce the zeroth agent to the system with initial zero
opinion  $x_0^* \in B(m^*_S,\epsilon)$ and weight $\delta >0$.  
Initially, the clusters
will obey initial dynamics and hence, by
\eqref{eq:ode-x0-m} $x^\delta_0(t)$ will lie on the line segment
$[m_S^*,x_0^*] \subset C^*_S$ until the first switching. Moreover, the system 
has to switch in finite time since the initial dynamics if unswitched will 
lead  $x^\delta_j(t) \to m_{S_0}^*$ as $t \to \infty$ for all $j \in S$, 
where $m^*_{S_0} = (W m^*_S + \delta x^*_0)/(W+\delta)$ is the center 
of mass of clusters in $S$ and agent zero ($W = \sum_{i \in S} w_i$). 
\begin{wrapfigure}[10]{R}{0.48\textwidth}
\centering
\tikzset{middlearrow/.style={
        decoration={markings,
            mark= at position 0.4 with {\arrow[scale = 3]{>}},
        },
        postaction={decorate}
    }
}
\vspace*{-0.5cm}\hspace*{-1cm}
\resizebox{0.35\textwidth}{!}{%
\begin{tikzpicture}
[extended line/.style={shorten >=-#1,shorten <=-#1},
 extended line/.default=1cm,
 one end extended/.style={shorten >=-#1},
 one end extended/.default=1cm,
 ]
\filldraw 
(0,2) circle (1.5pt) node[align=center, below] {\small$x^*_i$} [dashed] -- 
(4,2) circle (1.5pt) node[align=center,  below] {\small$m^*_{S}$} --
(6,2) circle (1.5pt) node[align=right,  below] {\small$x_j^*$};
\filldraw (5,5) circle (1.5pt) node[right] { \small $x_0^*$};
\filldraw (4.5,3.5) circle (1.5pt) node[right] { \small $m^*_{S_0}$};
\draw (4.5,3.5)  --  (4,2);
\draw [middlearrow = {>}] (5,5)--(4.5,3.5);
\draw [middlearrow = {>}](0,2)-- (4.5,3.5);
\draw [middlearrow = {>}] (6,2) -- (4.5,3.5);
\draw[dashed] (0,2) -- (5,5) -- (6,2);
\draw[dashed] (3,3)-- (4,2);
\filldraw (2,2) circle (0pt) node[below] {$\leq r$};
\filldraw (5,2) circle (0pt) node[below] {$\leq r$};
\filldraw (4.3,2.8) circle (0pt) node[right] {$y$};

\filldraw (3,3) circle (1.5pt) node[below] {\small$x^\delta_i(t)$}[dashed]--(5,5);

\end{tikzpicture} }
\caption{A Diagram}
\label{fig:necessary-proof}
\end{wrapfigure}
Also, since the clusters not in $S$ are not moving initially, and 
the the zero agent will lie on 
the line segment $[m_S^*,x_0^*]$, the first 
switching can not involve the zero agent and clusters not in $S$. 

Moreover, since the each cluster $i \in S$ is moving towards $m^*_{S_0}$ 
until the first switching, at any time $t \geq 0$, 
\[
|x_i^\delta(t) - m_{S_0}^*| \leq |x_i^* - m_{S_0}^*| \leq y+r, \; \forall i \in S
\]
where $y =  |m_s^* - m_{s_0}^*|$   as it is illustrated in Figure \ref{fig:necessary-proof}.  Note that  $x_0^* \in B(m^*_S,\epsilon)$ and one can choose $\delta \in [0, \delta_0]$ so that $m_{S_0}^*$ is closer to $m_S^*$, in other words, $y< \frac{\epsilon}{2}$.  Then,
\begin{equation*}
|x_i^\delta(t)- x_0^*| < \epsilon- y + |x_i^\delta(t)- m_{S_0}^* |  < \epsilon - y + y+r = \epsilon + r < 1,
\end{equation*}
\begin{equation*}
|x_i^\delta(t) - m_S^*| <  y + |x_i^\delta(t)- m_{S_0}^* |  < y + y+r < \epsilon + r < 1, \quad \forall i \in S, \; \forall t \geq 0.
\end{equation*}
Thus, the distance from $x_i^\delta(t)$ to the endpoints of the line segment $[m_S^*,x_0^*]$ is less than $1$ for all $t \geq 0$. Hence, we conclude that the distance from $x^\delta_i(t)$ to the line segment $[m_S^*,x_0^*]$ is also less than $1$ for all $t\geq 0$. 
Therefore, the first switching cannot involve the zero agent and a cluster in $S$.  
Then the first switching has to involve one cluster 
$i \in S$ and (at least) another cluster $j \in \{1,2,\dots,k\}$, so that at the 
first switching time $t_1^\delta$, we have that $|x^\delta_{i}(t_1^\delta) -
x^\delta_{j}(t_1^\delta)|=1$.   
Then, we may write
\begin{eqnarray*}
&1< |x^*_i-x_j^*| &\leq |x_i^\delta(t^\delta_1) -x_i^*| + |x_j^\delta(t_1^\delta) -x_j^*| + |x_i^\delta(t^\delta_1) -x_j^\delta(t^\delta_1)|\\
&\Rightarrow& |x_i^\delta(t^\delta_1) -x_i^*| + |x_j^\delta(t^\delta_1) -x_j^*|  \geq  |x_i^*-x_j^*| - 1> 0 \\
&\Rightarrow &\Delta(x_0^*,\delta;x^*) =  \sup_{t \geq 0, i }( |x_i^\delta(t) -x_i^*|) \geq \frac{1}{2} (|x_i^*-x_j^*| - 1) >0\\
&\Rightarrow & \lim_{\delta \to 0} \Delta(x^*_0,\delta;x^*) \geq \frac{1}{2}
        (|x_i^*-x_j^*| - 1) >0,
\end{eqnarray*}
and hence the equilibrium $x^*$ is not robust with respect to $x_0^*$. We note that the solution 
may not be unique after the first switching at time $t_1^\delta$, a fact that does not affect the above
conclusion.  
Since
$B(m_S^*,\epsilon)$ has strictly positive
Lebesgue measure, the final conclusion follows.
\end{proof}

Unfortunately, it is not clear to us if the negation of SCMC (let us call it non-SCMC) is sufficient 
for almost sure uniform robustness. The difficulty in showing sufficiency arises from non-uniqueness 
of solutions as well as type 2 trajectories discussed earlier. 

Before we give a set of sufficient conditions that
guarantee almost sure and uniform robustness, we state a simple geometric lemma. 
We note that by a unit sphere with center $x_0 \in \real^d$, we mean the set $\{ x \in \real^d \, | \, |x-x_0| =1 \}$,
and by the {\em radius} of a sphere, we shall mean any closed line segment joining the center $x_0$ with 
a point on the sphere. 
\begin{lemma}
\label{unit-spheres-intersects}
Let $x_1, x_2 \in \real^d$ and $|x_1- x_2|>1$. Let $S_1$, $S_2$ be unit spheres centered at $x_1$, $x_2$, respectively.
 Any radius of $S_1$ intersects $S_2$ at most once if and only if  $|x_1-x_2| \geq  \sqrt{2}$.
\end{lemma}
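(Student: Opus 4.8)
The plan is to convert this purely geometric claim into a root-counting statement for a single quadratic and then read off the threshold $\sqrt{2}$. First I would parametrize an arbitrary radius of $S_1$ as $\gamma(s) = x_1 + s\,u$ for $s \in [0,1]$ and a unit vector $u \in \real^d$, and abbreviate $D = |x_1 - x_2|$. The point $\gamma(s)$ lies on $S_2$ precisely when $|\gamma(s)-x_2|^2 = 1$, which upon expanding becomes the quadratic
\[
p(s) = s^2 - 2 c\, s + (D^2 - 1) = 0, \qquad c = u^T(x_2 - x_1).
\]
The number of points at which the radius meets $S_2$ is exactly the number of distinct roots of $p$ lying in $[0,1]$. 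As $u$ ranges over all unit vectors and $d \ge 2$, the scalar $c$ ranges over the full interval $[-D,D]$; this is the only place the dimension enters, and the multidimensional regime $d>1$ is the one of interest.

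Next I would locate the roots $s_\pm = c \pm \sqrt{c^2 - (D^2-1)}$. Since $D>1$, the product of the roots equals $D^2 - 1 > 0$, so the two roots carry the sign of $c$, and since $p(0) = D^2 - 1 > 0$ the value $s=0$ is never a root (the center $x_1$ never lies on $S_2$). Consequently two \emph{distinct} roots can fall in $[0,1]$ only when (a) $c > 0$, (b) the discriminant $c^2 - (D^2-1)$ is strictly positive, and (c) the larger root obeys $s_+ = c + \sqrt{c^2-(D^2-1)} \le 1$. Condition (c) forces $c \le 1$ and then, after squaring, is equivalent to $c \le D^2/2$. Thus for some direction a radius meets $S_2$ in two distinct points if and only if there is a feasible $c$ with $\sqrt{D^2-1} < c \le \min\{1, D^2/2\}$.

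Finally I would show this feasibility interval is nonempty exactly when $D < \sqrt{2}$. The comparison $\sqrt{D^2-1} < 1$ is equivalent to $D < \sqrt{2}$, while $\sqrt{D^2-1} < D^2/2$ reduces, on squaring, to $(D^2-2)^2 > 0$, i.e.\ to $D \ne \sqrt{2}$; together these give nonemptiness precisely when $D < \sqrt{2}$. Hence for $D \ge \sqrt{2}$ no radius can meet $S_2$ twice, whereas for $1 < D < \sqrt{2}$ choosing any $c$ in the nonempty interval and a unit vector $u$ realizing it produces a radius with two intersection points, which is exactly the asserted equivalence.

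I expect the main obstacle to be the bookkeeping in the root-location step: one must ensure that \emph{both} candidate roots lie in the closed segment $[0,1]$ rather than merely that the line through $x_1$ meets $S_2$, which is why the combined constraint $\sqrt{D^2-1} < c \le \min\{1,D^2/2\}$ (not a single inequality) is essential, and one must check that the critical value of $c$ is actually attained by some admissible direction $u$, requiring $d \ge 2$. Once these are handled, the threshold emerges cleanly from the identity $(D^2-2)^2 \ge 0$.
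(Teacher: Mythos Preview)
Your proposal is correct and follows essentially the same approach as the paper: both translate $x_1$ to the origin, parametrize a radius by a unit vector, reduce the intersection condition to the identical quadratic $s^2 - 2cs + (D^2-1)=0$, and then analyze when both roots land in $(0,1]$. The only difference is cosmetic: the paper handles the two regimes $D\ge\sqrt{2}$ and $1<D<\sqrt{2}$ by separate case analysis (using $t_+t_-=D^2-1\ge 1$ in the first case and an explicit construction of $\lambda$ in the second), whereas you package everything into the single feasibility interval $\sqrt{D^2-1}<c\le\min\{1,D^2/2\}$ and read off the threshold from $(D^2-2)^2\ge 0$. Your remark that $d\ge 2$ is needed so that $c=u^T(x_2-x_1)$ attains every value in $[-D,D]$ is also implicit in the paper's proof.
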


\begin{proof}
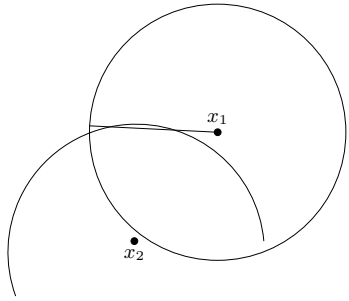
\begin{wrapfigure}[12]{r}{5cm}
\centering
\vspace*{-0.5cm}\hspace*{-1cm}
\resizebox{0.35\textwidth}{!}{%
\begin{tikzpicture}
	\filldraw (0,0) circle (1.5pt) node [align = left, above] {\small $x_1$};
	\filldraw (-1.3, -1.7) circle (1.5pt) node [ align = left, below] {\small $x_2$};
	\draw  (0,0) circle (2.0);
	\draw (0.72, -1.7) arc (5:200:2);
	\draw (0,0) -- (-2, 0.1);
\end{tikzpicture}}
\caption{\small A radius of $S_1$ intersects $S_2$ twice.}
\end{wrapfigure}
Without loss of generality we can assume that $x_1=0$. Hence $|x_2|>1$.  
Equation of radial lines of $S_1$ can be written as $x(t) = \lambda t$ where $\lambda \in \real^d$ is a unit norm  vector 
and $t \in [0,\infty)$. When $0 \leq t \leq 1$, $x(t)$ represents the radius of $S_1$ corresponding to $\lambda$.
This radius intersects $S_2$ if and only if there exists $t \in [0,1]$ such that 
	$ |\lambda t - x_2|^2 = 1 $ or equivalently
$
	t^2 - 2(\lambda^T x_2)t + |x_2|^2-1 = 0.
$
Solving this quadratic equation for $t$ values, one can easily obtain  
$
	t_{\pm} = 	\lambda^T x_2 
		\pm \sqrt{ (\lambda^T x_2)^2 
			- |x_2|^2+1}.
$
First, suppose that $|x_2| \geq \sqrt{2}$. If $t_+,t_-$ are not real, then there is no intersection. 
If these are real, then since $t_+ t_- = |x_2|^2-1 \geq 1$, either $t_+,t_-$ are both negative (implying no intersection), $t_+=t_-=1$ (implying exactly one intersection), $0<t_-<1<t_+$ (implying exactly one intersection) 
or $t_+ \geq t_- >1$ implying no intersection. 

Now, suppose that $1<|x_2| < \sqrt{2}$. We can choose a unit vector $\lambda$ such that  
$\lambda^T x_2$ lies in any desired nonempty subinterval of $[-|x_2|,|x_2|]$. 
In particular, we may choose $\lambda$ such that $0<\lambda^T x_2 < | x_2 |/\sqrt{2}<1$ and $(\lambda^T x_2)^2 > |x_2|^2 -1$. 
Since $|x_2|<\sqrt{2}$, we have that $|x_2|^2/2 > |x_2|^2-1$ and hence the above choice is feasible. 
Then we have  
\[
  0 \leq (\lambda^T x_2)^2 - |x_2|^2+1 < (\lambda^T x_2)^2 - 2(\lambda^T x_2)+1                               =  ( 1- (\lambda^T x_2))^2.
\]
Thus, 
$
t_+ = \lambda^T x_2  +  \sqrt{ (\lambda^T x_2)^2 
			- |x_2|^2+1} <\lambda^T x_2 + 1-(\lambda^T x_2)=1.
$
Moreover, since $t_+ t_- = |x_2|^2 -1>0$, we have that $0<t_-<t_+<1$, showing the existence of a radius of $S_1$ that intersects
$S_2$ twice.
\end{proof}

		

\begin{theorem}
\label{sufficient}
Let $x^*$ be a generic equilibrium with $k$ number of clusters that does 
not satisfy the shared center of mass condition. 
Furthermore, suppose that 
no three distinct closed balls $\overline{B}_i$ have a nontrivial intersection
and that for any $i \neq j$, we have that either 
$|m_{ij}^*- x_i^*| > \sqrt{2}$ or $|m_{ij}^*- x_j^*| > \sqrt{2}$
where $m^*_{ij} = \frac{w_ix_i^* + w_j x_j^*}{w_i+w_j}$.
Then, the equilibrium $x^*$ is robust almost surely and uniformly.
\end{theorem}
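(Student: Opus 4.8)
The plan is to show that, after discarding a Lebesgue-null set $Z$ of initial zero opinions, introducing the zero agent with weight $\delta$ displaces every cluster by only $O(\delta)$, uniformly in $t\in[0,\infty)$ and in $x_0^*\in\real^d\setminus Z$. I would take $Z$ to be the set of non-regular initial zero opinions and first argue it is null: the complement of $O$ is a finite union of switching spheres and hence has measure zero, the no-three-balls and genericity hypotheses remove triple intersections, and the positions producing Cases 2(d),2(e) lie on lower-dimensional tangency sets, so by transversality the initial opinions whose trajectories meet them are null. The substantive point is then that, under the stated hypotheses, \emph{every} regular trajectory is of type 1.

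First I would establish finiteness of switches for the $\delta=0$ dynamics. Because no three balls meet, the zero agent is influenced by at most two clusters at a time, so between switches it travels along a straight segment either toward a center $x_i^*$ (in $O_{\{i\}}$) or toward a two-cluster center of mass $m_{ij}^*$ (in $O_{\{i,j\}}$). The $\sqrt{2}$ hypothesis forces $|x_i^*-x_j^*|>\sqrt{2}$ for every interacting pair, so by Lemma~\ref{unit-spheres-intersects} any radius of $S_i$ meets $S_j$ at most once. I would use this to show that once the trajectory leaves a ball $B_i^*$ and heads toward some $x_j^*$ it cannot re-enter $B_i^*$, and, together with strict decrease of the piecewise-quadratic potential $V(x_0)=\sum_i w_i\,\rho(|x_0-x_i^*|)$ with $\rho(r)=\tfrac12\min(r^2,1)$ (for which $\dot V=-|\dot x_0|^2$), that each switching sphere is crossed only finitely often. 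Hence every regular trajectory is type 1; and since non-SCMC forces $m_{ij}^*\notin B_{ij}^*$, the agent cannot come to rest in a two-cluster region, so it must settle in some $O_S$ with $|S|\le 1$, i.e.\ it eventually joins a single cluster or becomes isolated.

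Next, for a regular (hence type 1) $\bar x_0^*\notin Z$, I would invoke Lemma~\ref{lem-type1} to obtain a neighborhood $B(\bar x_0^*,\epsilon)$ and $\delta_0>0$ on which, for all $\delta\in[0,\delta_0)$, the solution is unique and undergoes exactly $m$ transversal switches with times $t_1<\dots<t_m$ depending smoothly on $(\delta,x_0^*)$. The displacement bound then splits at $t_m$. On $[0,t_m]$ the clusters obey $\dot x_j=\delta\,(x_0-x_j)\,1_{\{|x_0-x_j|<1\}}$ (I would verify the clusters never mutually interact for small $\delta$, since they start more than distance $1$ apart and move at speed $O(\delta)$), so $|x_j(t)-x_j^*|\le \delta C t_m$ there, with the uniform-in-$x_0^*$ control coming from Lemma~\ref{unifom-convergence-of-trajectories}(i). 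After $t_m$ the zero agent and its host cluster $i$ form an isolated pair of conserved center of mass, so by the two-body estimate in Lemma~\ref{dynamics-x0-m} the host moves a further $\tfrac{\delta}{w_i+\delta}|x_0(t_m)-x_i(t_m)|=O(\delta)$ while every non-host cluster exits the agent's confidence ball and freezes. Combining the two windows gives $\Delta(x_0^*,\delta;x^*)=O(\delta t_m)+O(\delta)$. Passing to the global statement, since $\Delta=0$ whenever $x_0^*\notin\bigcup_i B_i^*$ it suffices to work over the bounded set $\overline{\bigcup_i B_i^*}$; covering its intersection with $\real^d\setminus Z$ by the Lemma~\ref{lem-type1} neighborhoods, extracting a finite subcover and taking the smallest $\delta_0$, I would conclude $\lim_{\delta\to0^+}\sup_{x_0^*\in\real^d\setminus Z}\Delta(x_0^*,\delta;x^*)=0$.

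I expect the main obstacle to be the infinite-horizon, dual-timescale part of the third step: controlling cluster displacement uniformly for all time needs the settling time $t_m$ bounded uniformly in $x_0^*$, yet $t_m$ can blow up as $x_0^*$ approaches a basin boundary where the trajectory grazes a sphere. The delicate point is to show that any such lingering is exactly the two-body capture, so that the conserved-center-of-mass estimate (rather than the crude $\delta t_m$ bound) governs the displacement in that regime; this is where the genericity and $\sqrt{2}$ hypotheses must be used most carefully.
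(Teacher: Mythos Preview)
Your route via Lemma~\ref{lem-type1} and a finite subcover has a genuine gap, and it is precisely the one you flag at the end. The set $\overline{\bigcup_i B_i^*}\cap(\real^d\setminus Z)$ is bounded but not closed, so you cannot extract a finite subcover; equivalently, the local $\epsilon$ and $\delta_0$ from Lemma~\ref{lem-type1} may shrink to zero as $x_0^*$ approaches the spheres, and the last switching time $t_m(\delta,x_0^*)$ need not be uniformly bounded. Your $O(\delta t_m)$ estimate on $[0,t_m]$ then gives nothing. The dissipative function $V$ you propose shows each regular trajectory has finitely many switches, but it does not give a switch count or terminal time that is uniform in $x_0^*$, which is what the supremum in the definition of almost sure uniform robustness requires.

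The paper does not use Lemma~\ref{lem-type1} at all. Instead it exploits the hypotheses to show directly that at most \emph{one} switching occurs, and that the switching time is bounded by an explicit $T_0$ independent of $\delta$ and $x_0^*$. There are only two scenarios. If $x_0^*\in B_1^*\setminus\bigcup_{j\neq 1}\overline{B_j^*}$, the trajectory moves along the segment $[x_0^*,x_1^*]$; since $|x_1^*-x_j^*|>\sqrt{2}$, Lemma~\ref{unit-spheres-intersects} gives $[x_0^*,x_1^*]\cap\overline{B_j^*}=\emptyset$, so \emph{no} switch occurs and Lemma~\ref{unifom-convergence-of-trajectories}(iii) applies on $[0,\infty)$ directly. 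If $x_0^*\in B_{12}^*$ with, say, $|m_{12}^*-x_2^*|>\sqrt{2}$, the paper first uses the exponential estimate of Lemma~\ref{dynamics-x0-m} together with Lemma~\ref{unifom-convergence-of-trajectories}(ii) to get a single $T_0$ by which $x_0^\delta$ must have left $B(x_2^\delta,1)$ for all $\delta$ and all $x_0^*\in B_{12}^*$; then it checks, via the sign of $(x_1^\delta-x_0^\delta)^T(m_{12}^\delta-x_0^\delta)$ and of $(x_0^\delta-x_2^*)^T(m_{12}^*-x_0^\delta)$ at the switch, that the zero agent cannot leave $B(x_1^\delta,1)$ and that the Filippov solution continues uniquely into the one-ball scenario. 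Part~(i) of Lemma~\ref{unifom-convergence-of-trajectories} is used only on the fixed finite interval $[0,T_0]$, and part~(iii) handles $[T_0,\infty)$. The stronger hypothesis $|m_{ij}^*-x_i^*|>\sqrt{2}$ (not merely $|x_i^*-x_j^*|>\sqrt{2}$) is what makes the transversality and unique-continuation computations at the single switch go through; your sketch uses only the weaker consequence, which the paper itself remarks would require a more delicate sliding-mode analysis.
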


We note that the key ideas of the proof of Theorem \ref{sufficient} are as
follows. First ensure that only one switching occurs with unique continuation of
trajectories 
for almost all initial zero opinions. Secondly, ensure that the first
switching
time is uniformly bounded, so that part (i) of Lemma
\ref{unifom-convergence-of-trajectories} is used prior to switching (finite
time interval) and 
part (ii) (infinite time interval) is used after the switching. The condition that $|m_{ij}^*- x_i^*| > \sqrt{2}$ or $|m_{ij}^*- x_j^*| > \sqrt{2}$ for all pairs $i \neq j$ was 
enforced to ensure uniqueness of solutions beyond the first switching. 
If the less restrictive condition 
of $|x_i^*-x_j^*| > \sqrt{2}$ for all pairs $i \neq j$ is used, more detailed 
analysis is needed to either establish uniqueness or investigate the nature of 
the resulting multiple solutions involving unstable sliding modes. 

\begin{proof}
We only need to consider initial zero opinions $x_0^*$ that lie in the union 
$\cup_{i=1}^k B_i$, but not on any of the spheres $|x_0-x_i^*|^2=1$. (The
latter are a set of measure zero, and initial zero opinions outside the union
$\cup_{i=1}^k \overline{B}_i$ do not perturb the equilibrium.)
We also note that under the assumptions of the theorem, $|x_i^*-x_j^*| >
\sqrt{2}$ for  $i \neq j$.  

As the initial zero opinion $x_0^*$ can be in at most two of the balls $B_i$, 
there are only two different scenarios. In Scenario 1, without of loss of generality,
$x_0^* \in B^*_1$ and $x_0^* \notin B^*_i$ for $i \neq 1$. 
Initially the dynamics will be given by
\begin{eqnarray}
\label{eq:case2-dynamics}
	\dot{x}_0^\delta   &=& w_1(x_1^\delta - x_0^\delta), \nonumber \\
  	 \dot{x}_1^\delta   &=& \delta( x_0^\delta - x_1^\delta),\nonumber \\
	\dot{x}_j^\delta   &=& 0, \quad  j = 2,3,\ldots,k,
\end{eqnarray}
with initial conditions
$
      x_0^\delta(0) = x^*_0, \quad x_j^\delta(0)= x_j^*, \quad \forall j = 1,2,\ldots,k.
$
As long as the system follows
\eqref{eq:case2-dynamics}, by Lemma \ref{dynamics-x0-m}, regardless of $\delta >0$, $x_0^\delta(t)$ and $x_1^\delta(t)$ will lie
on the line segments  $[x_0^*,m_{01}^*)$ and $(m_{01}^*,x_1^*]$ respectively
where $m_{01}^* = (\delta x_0^* + w_1 x_1^*)/(\delta+w_1)$, and moreover 
$x_j^\delta(t)$ remain equal to $x_j^*$ for $j \geq 2$. 

In fact, for all $\delta > 0$, $x_j^\delta(t)$, $j = 0,1,2,\ldots,k$ will obey 
the system \eqref{eq:case2-dynamics} for all times $t\geq 0$, i.e.\ no
switching happens. To see this, first note that since $x_0^* \in B^*_1 \setminus \cup_{j \neq 1} \overline{B^*_j}$ and $x_1^* \in
B^*_1 \setminus \cup_{j \neq 1} \overline{B^*_j}$, the fact that intercluster 
distances are greater than $\sqrt{2}$, 
together with Lemma \ref{unit-spheres-intersects} implies that the line segment $[x_0^*,x_1^*] \cap \overline{B^*_j}$ is an empty set for each 
$j = 2,\ldots,k$. The dynamics will change from \eqref{eq:case2-dynamics} only if $x^\delta_0(t)$ or $x_1^\delta(t)$ enter $B^*_j$ for some $j =2,\dots,k$. 
Since $x^\delta_0(t)$ and $x^\delta_1(t)$ remain on the line segments $[x_0^*,m_{01}^*)$ and $(m_{01}^*,x_1^*]$ respectively until such a change in the dynamics, this can only happen if $[x_0^*,x_1^*]$ intersects $\overline{B^*_j}$ for some $j=2,\dots,k$, which is not possible.    
Therefore the system will follow the dynamics \eqref{eq:case2-dynamics} for all times $t \geq 0$.
By Lemma \ref{unifom-convergence-of-trajectories},  part (iii) we can conclude that
 $$\lim_{\delta \to 0 } \sup_{t \geq 0, \; x_0^* \in K^*_1 }  |
x_1^\delta(t,x^*)-x^*_1|=0, $$ where $ K^*_1 = B^*_1\setminus \cup_{j \neq 1}
\overline{B}^*_j$ and  $x^* = (x_1^*,\dots,x_k^*)$. 
It is clear that if $\sR_2 \subset \real^d$ denotes the region of $x_0^*$ values defining Scenario 1, then $\sR_2$ is simply the finite union 
of $K^*_1,\dots,K^*_k$ which are defined similar to $K^*_1$, and thus the
closure of $\sR_2$ is compact. Thus by Lemma
\ref{unifom-convergence-of-trajectories} (part (iii)) 
\[
\lim_{\delta \to 0 } \sup_{t \geq 0, \; x_0^* \in \sR_2 }  | x^\delta(t)-x^*|=0.
\]
\begin{figure}[H]
\centering
\begin{tikzpicture}
	\filldraw (0,0) circle (1.5pt) node[ align = left, above] { \small $x_1^*$};
	\filldraw (3, 0) circle (1.5pt) node [ align = left, above] {\small $x_2^*$};
	\filldraw ( 1.3,-1.3) circle (1.5pt) node [align = left, left] {\small $x_0^*$};
	\draw  (0,0) circle (2.0);
	\draw (3,0) circle (2.0);
	\draw (0,0) -- (1.3,-1.3);
\end{tikzpicture}
\caption{An illustration of Scenario 1 in the proof of Theorem \ref{sufficient}. }
\label{fig:stable_case2}
\end{figure}

In the alternative, Scenario 2, $\exists i,j$ ($i \neq j$) such that $x_0^* \in
B^*_{ij}$. WLOG we will assume that $x_0^* \in B^*_{12} $ and $|m^*_{12}-x_2^*|>\sqrt{2}$.
 The dynamics of the motion initially will be as follows:
\begin{eqnarray}
\label{eq:case3-dynamics}
         \dot{x}^\delta_0   &=&  w_1(x_1^\delta - x_0^\delta)+ w_2(x_2^\delta - x_0^\delta), \nonumber \\
	 \dot{x}^\delta_1  &=& \delta( x_0^\delta - x_1^\delta), 		\nonumber \\
	 \dot{x}^\delta_2  &=&  \delta( x_0^\delta - x_2^\delta), 		\nonumber \\
	 \dot{x}^\delta_j  &=& 0, \quad j = 3,4,\ldots,k,
\end{eqnarray}
with initial conditions 
$
    x_0^\delta(0)=x^*_0,\, x_j^\delta(0)=x^*_j,\quad j = 1,2,\ldots,k.
$
We shall argue that the system has to switch and that there exists
$\delta_0>0$ such that the first switching 
time $T^ \delta(x^*_0)$  is uniformly bounded above by some $T_0 >0$
independent of $\delta \in [0,\delta_0]$ and $x_0^* \in B^*_{12}$. 
To see this, let  $\rho >0$ be such that $\overline{B(m_{12}^*, \rho)} \subset B^*_1 \setminus\overline{ \cup_{j\neq 1} B^*_j }$. 
Following Lemma \ref{unifom-convergence-of-trajectories} part (ii), for the dynamics \eqref{eq:case3-dynamics} and $x^*_0 \in B^*_{12}$ one can find $\delta_0>0$ such that $\forall \delta \in [0,\delta_0)$, 
\[
	\sup_{t \in [0,\infty], \; x_0^* \in B^*_{12} } |x^\delta_0(t,x^*)-x^0_0(t,x^*)| < \rho/2.
\] 
Additionally, by Lemma \ref{dynamics-x0-m}, with $\delta=0$, we get
\[
	 |x_0^0(t,x^*)- m_{12}^*|^2 \leq e^{-2 (w_1+w_2)t}  | x_0^*-m_{12}^*|^2,  \quad \forall x_0^* \in B^*_{12}.
\]
Note that $   | x_0^*-m_{12}^*|^2 < 2$, $\forall x_0^* \in B^*_{12} $.  Thus,  $\exists T_0 >0$ (independent of $\delta$), such that $\forall t \geq T_0$,  and  $\forall x_0^* \in B^*_{12} $,
$
	|x^0_0(t,x^*)- m^*_{12}| < \rho/2.
$
Thus, $\forall \delta \in [0,\delta_0)$,
\[
	\sup_{t ,\; x_0^*}|x^\delta_0(t,x^*)-m^*_{12}| \leq \sup_{t , \; x_0^* } |x^\delta_0(t,x^*)-x^0_0(t,x^*)| +	\sup_{t , \; x_0^*} |x^0_0(t,x^*)- m^*_{12}| <\rho,
\]
where supremum is taken over $t \in [T_0,\infty]$, and $x_0^* \in B^*_{12}$.
Thus for $t \geq T_0$, $\delta \in [0,\delta_0)$ and $x^*_0 \in B^*_{12}$ 
we have that $x^\delta_0(t,x^*_0) \in B(m^*_{12},\rho)$. 

Now, by Lemma \ref{unifom-convergence-of-trajectories} part (i) we have that 
\[
\lim_{\delta \to 0+} \sup_{t \in [0,T_0], x^*_0 \in B^*_{12}} |x^\delta_j(t,x^*_0)-x^*_j| =0,
\] 
for $j=1,2$ and hence taking $\delta_0$ smaller if necessary, we can conclude that
for $t \in [0,T_0]$, all the assumptions of the theorem hold 
with $x_1^*$,$x_2^*$,$m_{12}^*$,$B_1^*$ and $B_2^*$ replaced by $x_1^\delta(t)$,
$x_2^\delta(t)$, $m_{12}^\delta(t)$, $B(x^\delta_1(t),1)$ and $B(x^\delta_2(t),1)$
respectively, where $m_{12}^\delta(t) = (w_1 x_1^\delta(t) + w_2
x_2^\delta(t))/(w_1 + w_2)$. 

Also from the observation $x^\delta_0(t,x^*_0) \in B(m^*_{12},\rho)$ (for $t \geq T_0$), we can conclude that if no switching occurs, then
agent zero has to leave 
the ball $B(x^\delta_2(t),1)$ some time during $[0,T_0]$, which involves switching, leading to a
contradiction. Thus we can conclude that a switching occurs during $[0,T_0]$   
and that the only possible switching 
must involve one or both of the spheres
$|x_0-x_1|^2=1$ and $|x_0-x_2|^2=1$.  

We shall argue further, that, as long as the initial dynamics \eqref{eq:case3-dynamics}
persists, the zero agent cannot leave the ball $B(x_1^\delta(t),1)$, showing
that the first switching occurs on the sphere $|x_0-x_2|^2=1$.  In fact, after some algebra, one obtains that
\[
\frac{d}{dt} |x_1^\delta-x_0^\delta|^2 = - 2 \delta |x_1^\delta -
x_0^\delta|^2 - 2 (w_1 + w_2) (x_1^\delta-x_0^\delta)^T (m_{12}^\delta -
x_0^\delta).
\] 
Also, from the geometry of Figure \ref{fig:acute_angle}, we note that for 
\[
\inf_{x_0 \in B_1^* \cap B_2^*} (x_1^*-x_0)^T (m_{12}^* - x_0) >0.
\]
By Lemma \ref{unifom-convergence-of-trajectories} part (i), 
by shrinking $\delta_0>0$ if necessary to keep $m^\delta_{12}(t)$ and
$x_1^\delta(t)$ sufficiently close to $m_{12}^*$ and $x_1^*$ respectively (for $t \in [0,T_0]$), 
we can conclude that
\[
(x_1^\delta(t)-x_0^\delta(t))^T (m_{12}^\delta(t) -x_0^\delta(t)) > 0, \quad \forall \delta \in [0,\delta_0) \text{ and } t \in [0,T_0]. 
\]
\begin{figure}[H]
\centering
\begin{tikzpicture}

  	 \filldraw (0,0) circle (1.5pt) coordinate (x1) node[ align = left, above] { \small $x_1^*$};
	\filldraw (3, 0) circle (1.5pt) coordinate (x2) node [ align = left, above] {\small $x_2^*$};
	\filldraw (1.45, 0.85) circle (1.5pt) coordinate (x0)  node [align = right, below] {\small $x_0^\delta$};
	\filldraw (0.6,0) circle (1.5pt) coordinate (m12) node [align = right, below] {\tiny $m_{12}^*$};
	\draw (0,0) circle (2.0);
	\draw [ name path = circle]  (3,0) circle (2.0);
	\draw (x1) --  (x0)
	-- (x0) -- (m12);
	\pic["\tiny\bf$\theta$", draw= black, ->,  angle eccentricity=1.2, angle radius = 0.7cm] {angle = x1--x0--m12};
	\draw [dashed] (x1)--(m12)--(x2);
\end{tikzpicture}
	\caption{Shows $(x_1^* - x_0^\delta)^T(m_{12}^*- x_0^\delta) >0.$ }
\label{fig:acute_angle}
\end{figure}
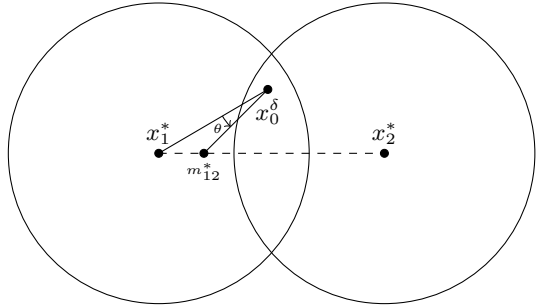
This shows that the 
distance between $x_0^\delta(t)$ and $x_1^\delta(t)$ is decreasing until the
first switching, and hence
the zero agent cannot leave $B(x_1^\delta(t),1)$. 
Hence we conclude that the first switching occurs at some time
$T^\delta(x^*_0) < T_0$ (for $\delta \in [0,\delta_0)$ and $x_0^* \in B_{12}^*$) 
when the dynamics enters the switching surface $|x_0-x_2|^2=1$. 

Now, we argue that the solution has a unique continuation beyond the first
switching time $T^\delta(x_0^*)$ in which the zero agent is only in the 
ball $B(x_1^\delta(t),1)$. To see this, we must examine the vector fields 
on either sides of the switching surface $|x_0-x_2|^2=1$, and these correspond
to the dynamics \eqref{eq:case3-dynamics} and \eqref{eq:case2-dynamics}. 
We first compute the time derivative
\[
\begin{aligned}
\frac{d}{dt} |x_0^\delta(t) - x_2^\delta(t)|^2 &= 2
(x_0^\delta(t)-x_2^\delta(t))^T (\dot{x}_0^\delta(t) - \dot{x}_2^\delta(t))\\
&= 2 (w_1+w_2) (x_0^\delta(t)-x_2^\delta(t))^T (m_{12}^\delta(t) -
x_0^\delta(t)) - 2 \delta |x_0^\delta(t) - x_2^\delta(t)|^2,
\end{aligned}
\]
which holds under the dynamics \eqref{eq:case3-dynamics}. We shall show that 
this is strictly positive for all sufficiently small $\delta$ at the time
$T^\delta(x_0^*)$ of switching, noting that
$|x_0^\delta(T^\delta(x_0^*))-x_2^\delta(T^\delta(x_0^*))|=1$. 
First we observe that $|x_0^\delta(T^\delta(x_0^*)) - x_1^\delta(T^\delta(x_0^*))|<1$,
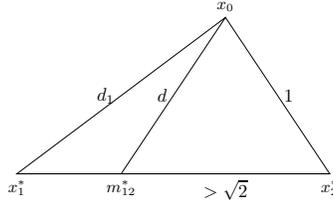
\begin{wrapfigure}[10]{r}{6cm}
\centering
\vspace*{-0.5cm}\hspace*{-1cm}
\resizebox{0.35\textwidth}{!}{%
\begin{tikzpicture}
\filldraw 
(0,2) circle (0pt) node[align=center, below] {\small$x_1^*$}  -- 
(2,2) circle (0pt) node[align=center,  below] {\small$m^*_{12}$} --
(6,2) circle (0pt) node[align=right,  below] {\small$x_2^*$};

\filldraw (4,5) circle (0pt) node[align = left, above] { \small $x_0$};
\draw  (4,5)-- node [left] {$d_1$}(0,2)--(2,2)--node [below] {$>\sqrt{2}$}(6,2)--node [right] {$1$}(4,5)--node [left] {$d$}(2,2);
\end{tikzpicture}}
\caption{ \small Since $|m_{12}^*-x_2^*| > \sqrt{2} >1$,  we have $d_1 > d$.}
\label{fig:triangle_acute}
\end{wrapfigure}
and from the geometry of Figure \ref{fig:triangle_acute} that 
$
|m_{12}^* - x_0^\delta(T^\delta(x_0^*))| < |x_1^* - x_0^\delta(T^\delta(x_0^*))|.
$
Let $\gamma = (|m_{12}^*-x_2^*|^2 - 2)/4$ which is strictly positive due to 
the assumptions of the theorem. Using part (i) of Lemma
\ref{unifom-convergence-of-trajectories} and the triangle inequality, by shrinking $\delta_0$ if needed,
we can assume that 
\[
|x_0^\delta(T^\delta(x_0^*))-x_1^*|^2 \leq 1 + 2 \gamma, \quad \forall x_0^* \in B_{12}^*.  
\]  
Hence, we obtain 
\[
\begin{aligned}
&(x^\delta_0(T^\delta(x_0^*))-x_2^*)^T (m_{12}^* - x^\delta_0(T^\delta(x_0^*))) \\ 
= \frac{1}{2} & |m_{12}^* - x_2^*|^2 - \frac{1}{2} 
|x^\delta_0(T^\delta(x_0^*)) - x_2^*|^2 - \frac{1}{2} |m_{12}^* - x^\delta_0(T^\delta(x_0^*))|^2\\
= \frac{1}{2} & |m_{12}^* - x_2^*|^2 - \frac{1}{2} - \frac{1}{2} |m_{12}^* -
x^\delta_0(T^\delta(x_0^*))|^2
\geq \gamma >0,
\end{aligned}
\]
which holds for all $\delta \in [0,\delta_0)$ and $x_0 \in B_{12}^*$. 
By shrinking $\delta_0$ further if necessary, we conclude that the 
derivative 
$\frac{d}{dt} |x_0^\delta(t) - x_2^\delta(t)|^2$ is strictly positive for all $\delta
\in [0,\delta_0)$ and $x_0^* \in B_{12}^*$ when $t = T^\delta(x_0)$. 
This ensures unique continuation (see \cite{dieci2009sliding, filippov1988differential, cortes2008discontinuous} for instance). 
To see that the unique continuation immediately enters the open set 
\[
\{x \in \real^{(k+1)d} \, | \, |x_0-x_1|<1, \text{ and } |x_i-x_j|>1 \text{ for  all other pairs } i \neq j\}    
\]
we compute the derivative 
$
\frac{d}{dt}|x^\delta_0(t)-x^\delta_2(t)|^2 = 2 w_1 (x^\delta_0(t) -x^\delta_2(t))^T (x_1^\delta(t)-x_0^\delta(t)),
$
which holds under the dynamics \eqref{eq:case2-dynamics}. Following
similar reasoning as above and using the fact that $|x_1^*-x_2^*|^2>2$, 
the result follows. 

Thus the dynamics will switch to
that of Scenario 2, at time $T^\delta(x_0^*) < T_0$ and thus follow \eqref{eq:case2-dynamics} with 
perturbed initial conditions $x^{**}_j$ for $j=0,1$ and $2$. Moreover, as argued
earlier, all the conditions of the theorem are still met 
for the perturbed initial conditions $x^{**}$. The rest of the proof follows
similar to Scenario 1. We conclude that $x^*$ is almost surely and uniformly robust.
\end{proof}
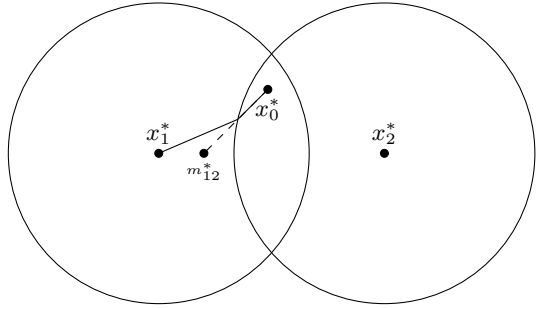
\begin{figure}[H]
\centering
\begin{tikzpicture}
	\filldraw (0,0) circle (1.5pt) node[ align = left, above] { \small $x_1^*$};
	\filldraw (3, 0) circle (1.5pt) node [ align = left, above] {\small $x_2^*$};
	\filldraw (1.45, 0.85) circle (1.5pt) node [align = right, below] {\small $x_0^*$};
	\filldraw (0.6,0) circle (1.5pt) node [align = right, below] {\tiny $m_{12}^*$};
	\draw (0,0) circle (2.0);
	\draw [ name path = circle]  (3,0) circle (2.0);
	\draw [dashed, name path =  line ] (1.45, 0.85) -- (0.6,0);
	\path [name intersections={of = circle and line}];
	\coordinate (A)  at (intersection-1);
	\draw (1.45, 0.85) -- (A);
	\draw  (A) -- (0,0);
\end{tikzpicture}
\caption{An illustration of Scenario 2 in the proof of Theorem \ref{sufficient}. }
\label{fig:stable_case3}
\end{figure}
In order to extend Theorem \ref{sufficient} to obtain less restrictive sufficient conditions, one may look for 
cases which ensure finitely many switchings with uniqueness of solutions for almost all 
initial opinions. In this context, our definition and discussion of type 1
and type 2 initial zero opinions as well as Lemma \ref{lem-type1} is useful. 
However, this lemma applies only locally in a neighborhood 
of a type 1 initial zero opinion. Thus, taking supremum over all initial 
opinions might be challenging. As type 1 initial zero opinions form an open
set, a key question is the nature of the complement of this set which includes type 2 
initial zero opinions. If one can show that this complement has measure zero, 
then it may help expand Theorem \ref{sufficient}. 

When we examine Theorem \ref{sufficient} in the one dimensional
case where necessary and sufficient conditions are known, we note that the 
condition that no three distinct confidence balls have a 
nontrivial intersection is automatically satisfied in 1D. However, the 
condition  that $|m_{ij}^*- x_i^*| > \sqrt{2}$ or $|m_{ij}^*- x_j^*| > \sqrt{2}$
for all $i \neq j$ or even  the less restrictive condition that $|x_i^*-x_j^*|
> \sqrt{2}$ for all pairs $i \neq j$ is not necessary in 1D. 
In fact it must be noted that the $\sqrt{2}$ condition was imposed to avoid multiple switchings in Theorem \ref{sufficient}. Likewise the condition that no three distinct confidence balls have a nontrivial intersection was also imposed to avoid multiple 
switchings. Therefore, we feel that the conditions of Theorem \ref{sufficient} may be far from being necessary 
even in multiple dimensions. 
On the other hand, there is no reason to expect that the non-shared center of mass condition, to be not 
sufficient in multiple dimensions. 
\end{subsection}\end{section} 
\begin{section}{Numerical Results}
In this section, we present some numerical results for the opinion dynamic
model \eqref{eq:opinionmodel}.  Our simulations represent the cases for which
agents have vector opinions in $\real^2$ ( $d=2$). We use  a uniform weight
$w_i= 1$, $\forall i = 1,2, \ldots, n$ and a uniform confidence bound $q =
q_{ij} =1$, $\forall i,j =1,2,\ldots,N$. We take the interaction functions
$\xi_{ij}$ to be indicator functions of $[0,1)$. We use MATLAB {\tt ode45} for the solution of the ODEs. 
\begin{figure}[H]
\centering
\subfloat[]{\includegraphics[width =0.5\textwidth, height = 2in ]{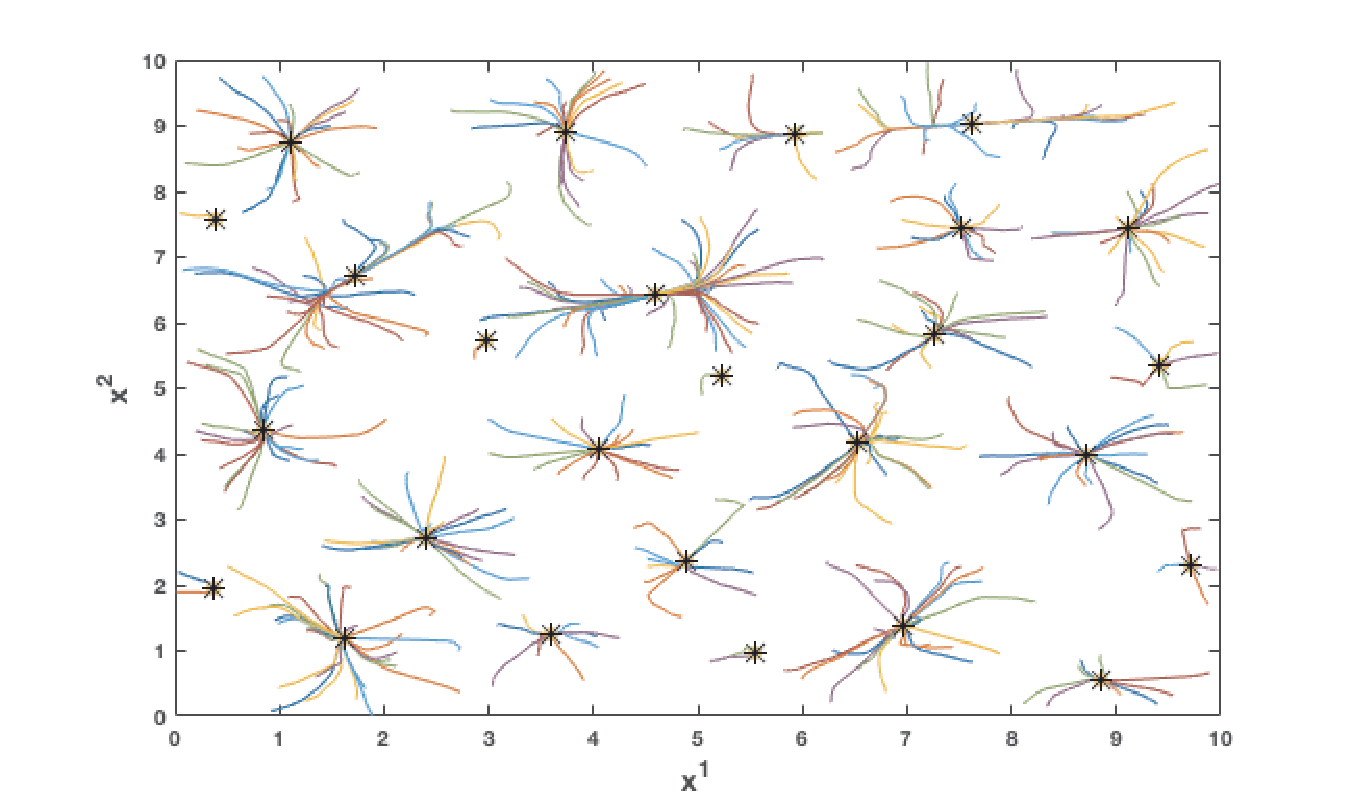}
		\label{fig:figure2d-plane} }
\subfloat[]{\includegraphics[width =0.5\textwidth, height = 2in ]{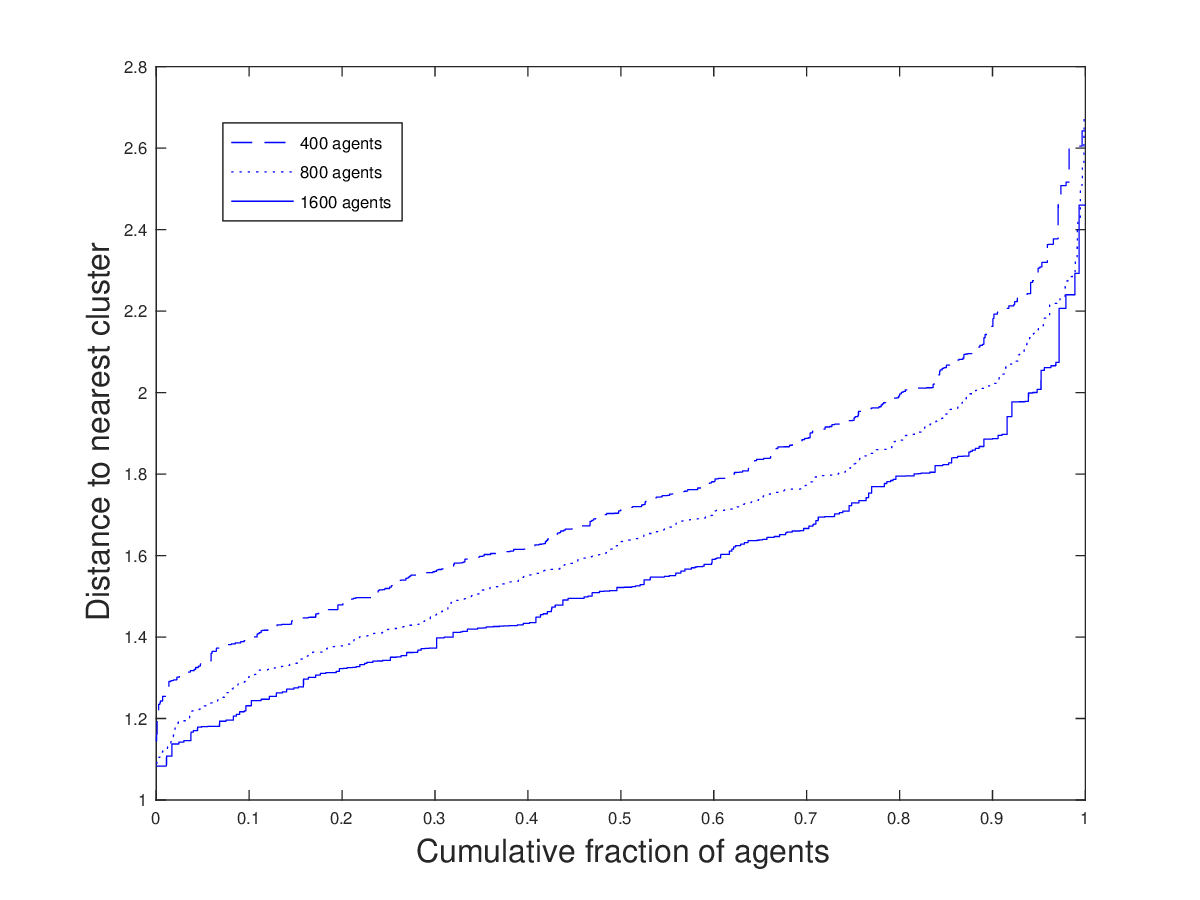}
\label{fig:DistanceNearestCluster} }
\caption{ \emph{(a)} Time evolution of $400$ agent opinions $(x^1,x^2)\in
  \real^2$. Initial opinions are independently and uniformly distributed on a $10 \times10$ square region. \emph{(b)} Distribution of the nearest cluster distance for different population sizes.}
\label{fig:figure2d-ab} 
\end{figure}
Figure \ref{fig:figure2d-ab}\subref{fig:figure2d-plane}  shows the evolution of 400 agent opinion vectors in $\real^2$. Each curve in Figure \ref{fig:figure2d-ab}\subref{fig:figure2d-plane} represents the trajectory of an agent's opinion and  star markings represent the limiting opinions (clusters). Figure \ref {fig:figure2d-ab}\subref{fig:figure2d-plane} explicitly  shows  the evolution and the convergence of the trajectories to an equilibrium with multiple clusters. 
\begin{table}[H]
\caption{ Some main properties of equilibria obtained from experiments with $400,800$ and $1600$ agents with randomly chosen vector opinions in $\real^2$. Columns represent the following. A: Number of agents, B: Number of experiments, C: Number of clusters, D: Number of pairs satisfying pairwise SCMC, E: Diameter of cluster spread, F:Median distance to the nearest cluster, G: Spread of  nearest cluster distance.}
\begin{center} \footnotesize
\begin{tabular}{|c|c|c|c|c|c|c|c|c|cl}\hline
A & B & C & D &  E 	& F  &  G \\
&  &       &  & 	&   & 	  \\ \hline 
$400 $&40 & $24.6 \pm 0.5$ &  $3.2 \pm 0.3$			 & 	$11.93 \pm	0.186$ & $1.7120$ &  	0.9714		 \\ \hline  
$800$ &40&     $26.4 \pm 0.6$ &     $2.6\pm  0.4 $         &	$11.86 \pm 0.166$ &	1.6307 &	0.9303		  \\ \hline  
$1600$ &10 &	$28.8 \pm 1.5$ &	$4 \pm 0.9$	 &	$11.63 \pm 0.401$  & 1.5216	& 0.8274		   \\ \hline  
\end{tabular}
\end{center} 
\label{table-2d} 
\end{table} 
We also investigated the nature of the equilibria that result from starting with $n$ agents with randomly chosen initial 
conditions that are uniformly and independently distributed on a $10 \times
10$ square region. With $n=400, 800$ and $1600$ number of agents, we performed
numerical experiments multiple times and the results are presented in Table
\ref{table-2d}. In columns C and D we have shown the means and the $95\%$
confidence intervals for the number of clusters and 
 the number of pairs that satisfy the pairwise SCMC (to be precise, the number of two element 
subsets $S \subset \{1,\dots,n\}$ such that $m^*_S \in C^*_S$), 
respectively. Column E shows the mean and the $95\%$ confidence
intervals of the ``diameter of the cluster spread'' defined by 
the largest intercluster distance. Columns F and G, for each value of $n$, 
 consider the distance to the nearest neighbor cluster for agents from the 
combined statistics of the experiments. Column F shows the median ($50$th
percentile) value and column G shows the ``spread'' as measured by the
difference between the $95$th and $5$th percentiles.   

We notice some trends as $n$ increases. The number of 
pairs satisfying the SCMC shows a decrease going from $n=400$ to $n=800$, 
although it increases again for $n=1600$. It was also observed that the probability 
of a non-robust set of limiting equilibrium clusters is close to $1$, 
 which seems to contradict the 
conjecture by Blondel et al.\ \cite{blondel2010continuous} mentioned in our introduction. 
It has been observed numerically in the one dimensional case that as the agent number $n$ becomes large,  the 
resulting equilibria become robust with a probability approaching $1$
\cite{blondel2010continuous, hendrickx2008graphs}, in agreement with
the conjecture. 
We feel that in our two dimensional case, the number of agents $n$ is not large enough to see 
a result that is in agreement with this conjecture. In fact, columns C and D
show that the trends are not convergent for $n=1600$. We suspect 
that if $n=1000$ showed agreement with the conjecture in one dimension, 
then $n=1000^2$ number of agents will be required to see a similar level of 
agreement in two dimensions. This would mean much 
larger values of $n$ must be used to verify the conjecture in two dimensions, which 
will require a faster C language implementation of the ODE solvers.  

In Figure \ref{fig:figure2d-ab}\subref{fig:DistanceNearestCluster}
we have plotted the distance to the nearest neighbor cluster in the 
vertical axis and the fraction of agents with a distance less than or equal to
that value in the horizontal axis. 
The curves in Figure \ref{fig:figure2d-ab}\subref{fig:DistanceNearestCluster}
as well as columns F and G of Table \ref{table-2d} show a trend where both the median and the spread of 
the distance to the nearest neighbor cluster is decreasing. It is also clear
that the trend is not convergent for $n=1600$. An interesting question is
whether in the limiting case (as $n \to \infty$) the curve becomes nearly flat,
and if so, how does the corresponding value relate to $\sqrt{2}$?  
\end{section}
\begin{section}{Concluding remarks}
We analyzed the opinion dynamic model \eqref{eq:opinionmodel} 
for a general class of interaction functions $\xi_{ij}$. 
Even if we consider the more general model where for each component $\ell$ of the
opinions of agents 
has different interaction functions $\xi^\ell_{ij}$ as given by \eqref{eq:opinionmodel2},
many of our results of \S 2  are still valid. 
The robustness analysis of \S 3 however, will be more complicated. 

When $\xi_{ij} = 1_{[0,1)}$, the one dimensional necessary and sufficient
  condition \cite{blondel2010continuous} coincides with the non-SCMC 
(negation of SCMC) which is necessary in higher dimensions as shown by our Theorem \ref{necessary}. 
The open question is whether it is also sufficient? In
\cite{hendrickx2008graphs} a necessary and sufficient condition without
rigorous proof is described in terms of invariant sets of the dynamics of the zero agent with weight $\delta=0$. 
In order to rigorously demonstrate that conclusions made by studying $\delta=0$ carry over to 
the limiting case $\delta \to 0+$, one requires uniform perturbation of solutions on the infinite time interval. 
Our Lemma \ref{unifom-convergence-of-trajectories} shows that when the dynamics is
linear, the uniform perturbation on infinite time interval holds only if the
zero agent is interacting with only one other agent. This key lemma was used
in Theorem \ref{sufficient} to obtain a sufficient condition for robustness. 
The discussion below this theorem outlines possible strategies for extending 
it. When the interaction functions are indicators but with different confidence bounds, the dynamics 
is still linear in between switchings and we expect that both Theorem
\ref{necessary} and \ref{sufficient} could be generalized. If one considers 
more general $\xi_{ij}$ as in \S 2, then we believe that the uniform 
perturbation results in Lemma \ref{unifom-convergence-of-trajectories} can be 
generalized. However, with general $\xi_{ij}$, the  
straight line trajectories of Lemma \ref{dynamics-x0-m} will change,
complicating the analysis.  

The notion of almost sure uniform robustness 
we introduced in \S 3 while desirable, may be more difficult to establish than 
the notion of robustness with respect to a specific initial zero opinion $x_0^*$, which is
also introduced in \S 3. 
We also believe that it is most natural to consider Filippov solutions
(instead of Caratheodory solutions or Proper solutions)
as we have done, especially in 
light of the fact that in \cite{ceragioli2010continuous} the existence of 
sliding mode solutions for the opinion dynamics is shown. This, however, necessitates 
a thorough understanding of sliding mode solutions (stable and unstable) in 
order to undertake a complete study of robustness. 

\end{section}

\end{document}